\DeclareFontFamily{OT2}{cmr}{\hyphenchar\font45 }
\DeclareFontShape{OT2}{cmr}{m}{n}{
<5><6><7><8><9>gen*wncyr
<10><10.95><12><14.4><17.28><20.74><24.88>wncyr10}{}
\DeclareFontShape{OT2}{cmr}{b}{n}{
<5><6><7><8><9>gen*wncyb
<10><10.95><12><14.4><17.28><20.74><24.88>wncyb10}{}
\DeclareMathAlphabet{\mathcyr}{OT2}{cmr}{m}{n}
\DeclareMathAlphabet{\mathcyb}{OT2}{cmr}{b}{n}
\SetMathAlphabet{\mathcyr}{bold}{OT2}{cmr}{b}{n}
\numberwithin{equation}{section}
\newcommand{\shortmathcal}[1]{\@tfor\ch:=#1\do{
\expandafter\edef\csname c\ch\endcsname{\noexpand\mathcal{\ch}}
}}
\newcommand{\shortmathbb}[1]{\@tfor\ch:=#1\do{
\expandafter\edef\csname bb\ch\endcsname{\noexpand\mathbb{\ch}}
}}
\newcommand{\shortmathbf}[1]{\@tfor\ch:=#1\do{
\expandafter\edef\csname b\ch\endcsname{\noexpand\mathbf{\ch}}
}}
\newcommand{\shortboldsymbol}[1]{\@tfor\ch:=#1\do{
\expandafter\edef\csname bs\ch\endcsname{\noexpand\boldsymbol{\ch}}
}}
\newcommand{\shortmathfrak}[1]{\@tfor\ch:=#1\do{
\expandafter\edef\csname f\ch\endcsname{\noexpand\mathfrak{\ch}}}}
\newcommand{\shortmathscr}[1]{\@tfor\ch:=#1\do{
\expandafter\edef\csname s\ch\endcsname{\noexpand\mathscr{\ch}}}}
\newcommand{\shortmathrm}[1]{\@tfor\ch:=#1\do{
\expandafter\edef\csname r\ch\endcsname{\noexpand\mathrm{\ch}}
}}
\let\@@span\span
\def\sp@n{\@@span\omit\advance\@multicnt\m@ne}
\DeclareFontFamily{OT2}{cmr}{\hyphenchar\font45 }
\DeclareFontShape{OT2}{cmr}{m}{n}{
<5><6><7><8><9>gen*wncyr
<10><10.95><12><14.4><17.28><20.74><24.88>wncyr10}{}
\DeclareFontShape{OT2}{cmr}{b}{n}{
<5><6><7><8><9>gen*wncyb
<10><10.95><12><14.4><17.28><20.74><24.88>wncyb10}{}
\DeclareMathAlphabet{\mathcyr}{OT2}{cmr}{m}{n}
\DeclareMathAlphabet{\mathcyb}{OT2}{cmr}{b}{n}
\SetMathAlphabet{\mathcyr}{bold}{OT2}{cmr}{b}{n}
\newcommand{\id}{\mathrm{id}}
\newcommand{\ri}{\mathsf{ri}}
\newcommand{\emp}{\varnothing}
\newcommand{\ep}{\varepsilon}
\renewcommand{\mod}{\mathrm{mod}}
\newcommand{\sh}{\mathbin{\mathcyr{sh}}}
\newcommand{\BIMU}{\mathsf{BIMU}}
\newcommand{\MU}{\mathsf{MU}}
\newcommand{\LU}{\mathsf{LU}}
\newcommand{\mmu}{\mathsf{mu}}
\newcommand{\ganit}{\mathsf{ganit}}
\newcommand{\axit}{\mathsf{axit}}
\newcommand{\amit}{\mathsf{amit}}
\newcommand{\anit}{\mathsf{anit}}
\newcommand{\arit}{\mathsf{arit}}
\newcommand{\ilat}{\mathsf{ilat}}
\newcommand{\gari}{\mathsf{gari}}
\newcommand{\gira}{\mathsf{gira}}
\newcommand{\preari}{\mathsf{preari}}
\newcommand{\preila}{\mathsf{preila}}
\newcommand{\ari}{\mathsf{ari}}
\newcommand{\ila}{\mathsf{ila}}
\newcommand{\GARI}{\mathsf{GARI}}
\newcommand{\ARI}{\mathsf{ARI}}
\newcommand{\adari}{\mathsf{adari}}
\newcommand{\expari}{\mathsf{expari}}
\newcommand{\invmu}{\mathsf{invmu}}
\newcommand{\invgari}{\mathsf{invgari}}
\newcommand{\Pa}{\mathsf{Pa}}
\renewcommand{\Pi}{\mathsf{Pi}}
\newcommand{\foz}{\mathfrak{oz}}
\newcommand{\pic}{\mathsf{pic}}
\newcommand{\fess}{\mathfrak{ess}}
\newcommand{\pir}{\mathsf{pir}}
\newcommand{\as}{\mathsf{as}}
\newcommand{\al}{\mathsf{al}}
\newcommand{\il}{\mathsf{il}}
\newcommand{\di}{\mathsf{di}}
\newcommand{\fre}{\mathfrak{re}}
\newcommand{\dro}{\mathfrak{r}\ddot{\fo}}
\newcommand{\fSe}{\mathfrak{Se}}
\newcommand{\fTe}{\mathfrak{Te}}
\newcommand{\ful}[1]{{}_{#1}\lceil}
\newcommand{\fll}[1]{{}_{#1}\lfloor}
\newcommand{\fur}[1]{\rceil_{#1}}
\newcommand{\flr}[1]{\rfloor_{#1}}
\renewcommand{\neg}{\mathsf{neg}}
\newcommand{\swap}{\mathsf{swap}}
\newcommand{\anti}{\mathsf{anti}}
\newcommand{\pari}{\mathsf{pari}}
\newcommand{\push}{\mathsf{push}}
\newcommand{\mantar}{\mathsf{mantar}}
\newcommand{\swamu}{\mathsf{swamu}}
\newcommand{\leng}{\mathsf{leng}}
\newcommand{\der}{\mathsf{der}}
\newcommand{\ucQ}{\overline{\cQ}}
\newcommand{\lcQ}{\underline{\cQ}}
\newcommand{\ucG}{\overline{\cG}}
\newcommand{\lcG}{\underline{\cG}}
\newcommand{\ucL}{\overline{\cL}}
\newcommand{\lcL}{\underline{\cL}}
\newcommand{\lpreiha}{\mathbin{\underline{\circ}}}
\newcommand{\upreiha}{\mathbin{\underline{\circ}_{\mathsf{ari}}}}
\newcommand{\ls}{\mathfrak{ls}}
\newcommand{\dmr}{\mathfrak{dmr}}
\newcommand{\baral}{\underline{\mathsf{al}}}
\newcommand{\baril}{\underline{\mathsf{il}}}
\newcommand{\mpar}{\mathsf{par}}
\newcommand{\dara}{\mathsf{dara}}
\newcommand{\diri}{\mathsf{diri}}
\newcommand{\bari}{\underline{\ari}}
\newcommand{\DMR}{\mathsf{DMR}}
\newcommand{\ucst}{\bu\text{-}\mathsf{cst}}
\newcommand{\vcst}{\bv\text{-}\mathsf{cst}}
\newcommand{\ds}{\mathfrak{ds}}
\theoremstyle{definition}
\newtheorem{theorem}{Theorem}[section]
\newtheorem{lemma}[theorem]{Lemma}
\newtheorem{corollary}[theorem]{Corollary}
\newtheorem{definition}[theorem]{Definition}
\theoremstyle{remark}
\newtheorem{remark}[theorem]{Remark}
\title{Ecalle's dimorphic transportation and Brown's lifting}
\author{Hanamichi Kawamura}
\address[Hanamichi Kawamura]{Department of Mathematics, Graduate School of Science, Tokyo University of Science, 1-3 Kagurazaka, Shinjuku-ku, Tokyo, 162-8601, Japan}
\email{1125512@ed.tus.ac.jp}
\subjclass[2020]{11M32, 16B05.}
\keywords{Double shuffle equations, flexion, dimorphy}
\begin{document}
\begin{abstract}
    Brown's lifting procedure shows that one can solve the double shuffle equations modulo products from solutions of linearized ones. In this paper, we reveal that it is described in the framework of Ecalle's dimorphic transportation.
\end{abstract}
\maketitle
\section{Introduction}
We have been interested in the double shuffle structure in the framework $\DMR_{0}$ as a group and $\dmr_{0}$ as a Lie algebra.
They are originally formulated in a non-commutative power series (or polynomial) ring, but there is another setting which is constructed in the so-called \emph{flexion theory} established by Ecalle.
In particular, it is important to investigate a concrete solution of the double shuffle equation modulo products ($=$ an element of $\dmr_{0}$).
Brown \cite{brown17} got some systems of such solutions, and Matthes--Tasaka \cite{mt19} started to compare Brown's and Ecalle's solutions.
Furthermore, recently Furusho--Hirose--Komiyama \cite{fhk26} revealed that one of Brown's system $\{\psi_{2n+1}\}_{n}$ can be exactly described as Ecalle's \emph{polar singulates}.
On the other hand, Brown \cite{brown17} and Ecalle \cite{ecalle11} also independently obtained a method to get solutions $\chi_{B}(f),\chi_{E}(f)\in\ds_{\cQ}$ of the double shuffle equations admitting some poles by lifting each solution $f\in\ls_{\cQ}$ of the ``linearlized'' double shuffle equations.
In this note, we reveal that Brown's lifting $\ls_{\cQ}\ni f\mapsto\chi_{B}(f)\in\ds_{\cQ}$ essentially coincides with the operator $\adari(\mpar)$, which is an example of Ecalle's lifting.

\begin{theorem}\label{thm:main}
    We have
    \[\swap\circ\anti\circ\chi_{B}=\adari(\mpar)\circ\swap\circ\anti.\]
\end{theorem}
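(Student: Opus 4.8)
The plan is to prove the asserted operator identity by evaluating both sides on an arbitrary $f\in\ls_{\cQ}$ and comparing the resulting moulds. Since $\anti$ and $\swap$ are involutions, the equality is equivalent to the conjugation formula
\[\chi_{B}=\anti\circ\swap\circ\adari(\mpar)\circ\swap\circ\anti,\]
obtained by pre-composing the displayed identity with $(\swap\circ\anti)^{-1}=\anti\circ\swap$. Thus it suffices to show that Brown's lifting is the $\swap\circ\anti$-conjugate of Ecalle's lifting $\adari(\mpar)$. First I would recall the explicit mould formula defining $\chi_{B}$, together with the flexion formulas for $\swap$ and $\anti$, so that the right-hand side of this reformulation becomes a concrete expression in the flexion markings.

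Next I would unfold $\adari(\mpar)$ as the $\ari$-adjoint action $M\mapsto\ari(\mpar,M)$ of the mould $\mpar$ and push the two conjugating involutions through the bracket. The main tool here is the known behaviour of $\swap$ and $\anti$ relative to the flexion bracket: $\swap$ exchanges the $\ari$-structure with its swapped counterpart, while $\anti$ acts as a sign-controlled (anti)automorphism. Transporting $\ari(\mpar,-)$ through the conjugation turns it into a bracket (in the transported structure) against the transported kernel $\swap\circ\anti(\mpar)$, so the problem reduces to a single mould identity comparing this transported kernel with the operation prescribing $\chi_{B}$.

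The crux is this last identification. Brown's lifting is defined by a combinatorial recursion in the original power-series frame, whereas $\adari(\mpar)$ is flexion-native, so the matching must be carried out through the nontrivial change of variables built into $\swap$. I would verify the resulting kernel identity depth by depth, equivalently as an identity of generating series, checking that the contribution of $\mpar$ after transportation reproduces exactly Brown's correction term; the specific choice of $\mpar$ is precisely what makes this work. The main obstacle is the bookkeeping of the flexion markings under $\swap$: one must track how the substitution interacts with the upper and lower flexion operations so that the transported bracket lands in $\ds_{\cQ}$ and agrees term-by-term with $\chi_{B}$. Finally, I would confirm that both constructions indeed take values in $\ds_{\cQ}$, so that the comparison is between genuine double shuffle solutions and the operator identity holds on all of $\ls_{\cQ}$.
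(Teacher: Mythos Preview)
Your proposal has a genuine gap at the step where you ``unfold $\adari(\mpar)$ as the $\ari$-adjoint action $M\mapsto\ari(\mpar,M)$.'' This is a misidentification: $\adari(S)$ is the adjoint action of the \emph{group} $\GARI$ on the Lie algebra $\ARI$, not the single Lie bracket $\bari(S)(M)=\ari(S,M)$. In particular $\adari(\mpar)$ is not expressible as one bracket against a ``kernel''; it is an infinite sum of iterated $\ari$-brackets. Consequently your reduction to ``a single mould identity comparing this transported kernel with the operation prescribing $\chi_{B}$'' cannot go through, and the ``depth-by-depth'' check you propose compares the wrong objects: on Brown's side $\chi_{B}$ is an infinite recursion in $\psi_{0}$, not a single correction term.

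The paper's proof fills exactly this hole with two nontrivial ingredients you do not mention. First (Theorem~3.2), it establishes a dilator expansion
\[
\adari(S)=\sum_{s\ge 0}\sum_{r_{1},\ldots,r_{s}\ge 1}\frac{1}{r_{1}(r_{1}+r_{2})\cdots(r_{1}+\cdots+r_{s})}\,\bari(\leng_{r_{s}}(\di\ri S))\circ\cdots\circ\bari(\leng_{r_{1}}(\di\ri S)),
\]
so that the iterated brackets involve the $\gari$-dilator $\diri S$, not $S$ itself. Second (Lemma~4.1), it identifies $\tfrac12\anti(\psi_{0})=\swap(\diri\mpar)$, i.e.\ $\psi_{0}$ is precisely the dilator of $\mpar$ seen through $\swap\circ\anti$. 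With these in hand, one unfolds Brown's recursion into iterated Ihara brackets, transports them to iterated $\ari$-brackets via the Ihara/$\ari$ correspondence on $\mantar$-invariants (Corollary~2.5), and recognises the result as the dilator expansion of $\adari(\mpar)$ applied to $f^{\sharp}$. Your outline of conjugating brackets by $\swap$ and $\anti$ is in the right spirit for this last transport step, but without the dilator expansion and the identification of $\psi_{0}$ with $\diri\mpar$ the argument cannot close.
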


While the original preprint \cite{brown17} does not include the proof that $\chi_{B}(f)$ gives a solution in $\ds_{\cQ}$ for each $f\in\ls_{\cQ}$, Kimura--Tasaka's report \cite{tasaka} present that assertion as a open conjecture. We can give an affirmative answer to it by Theorem \ref{thm:main} and the machinery of Ecalle's dimorphic transportation.

\begin{corollary}\label{cor:main}
    For any $f\in\ls_{\cQ}$, we have $\chi_{B}(f)\in\ds_{\cQ}$.
\end{corollary}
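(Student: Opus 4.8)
The plan is to read off the corollary directly from Theorem~\ref{thm:main}. Since $\swap$ and $\anti$ are involutive bijections of the ambient bimould space, Theorem~\ref{thm:main} is equivalent to
\[\chi_B=\anti\circ\swap\circ\adari(\mpar)\circ\swap\circ\anti,\]
and, more usefully, it gives $(\swap\circ\anti)(\chi_B(f))=\adari(\mpar)\big((\swap\circ\anti)(f)\big)$ for every $f$. Because $\swap\circ\anti$ is a bijection, $\chi_B(f)\in\ds_\cQ$ holds if and only if $\adari(\mpar)\big((\swap\circ\anti)(f)\big)$ lies in $(\swap\circ\anti)(\ds_\cQ)$. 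The proof thus reduces entirely to controlling the action of $\adari(\mpar)$ on the symmetry classes obtained by applying $\swap\circ\anti$ to $\ls_\cQ$ and to $\ds_\cQ$.

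First I would record the flexion-theoretic descriptions of the two spaces. The linearized condition $f\in\ls_\cQ$ amounts to bialternality in the polynomial (pole-free) setting, that is, $f$ lies in $\ARI_{\al/\al}$, while $\ds_\cQ$ is the genuine double shuffle space $\ARI_{\baral/\baril}$, cut out by $\ari$-alternality together with alternility of the swap and the constancy/push conditions encoded by the underlines. I would then compute $X:=(\swap\circ\anti)(\ls_\cQ)$ and $Y:=(\swap\circ\anti)(\ds_\cQ)$: since $\anti$ only reverses words and $\swap$ exchanges the two families of flexion variables, each image is again described by explicit alternality/alternility conditions, now in the convention in which Ecalle's transportation operator acts naturally.

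The crucial step, which I expect to be the main obstacle, is Ecalle's dimorphic transportation statement that $\adari(\mpar)$ maps $X$ into $Y$. Its first half is formal: as $\mpar$ is symmetral ($\mpar\in\GARI_{\as}$), the inner automorphism $\adari(\mpar)$ respects the $\ari$-bracket and therefore preserves $\ari$-alternality. The substantive half is that $\adari(\mpar)$ trades the swapped alternal condition for the swapped alternil one --- this is precisely where the poles of $\mpar$ are introduced, and it is the heart of the transportation machinery; I would establish it from the relation between $\mpar$ and its swap, tracking how conjugation by $\mpar$ turns a shuffle symmetry into a stuffle symmetry. The most delicate point is to confirm that the additional constancy and $\push$-neutrality conditions built into the underlined classes $\baral/\baril$ survive, so that the image genuinely lands in $\ds_\cQ=\ARI_{\baral/\baril}$ and not merely in $\ARI_{\al/\il}$; I expect this to follow from the $\push$- and $\swap$-invariance properties of $\mpar$.

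Granting the inclusion $\adari(\mpar)(X)\subseteq Y$, the corollary is immediate: for $f\in\ls_\cQ$ we have $(\swap\circ\anti)(f)\in X$, hence $\adari(\mpar)\big((\swap\circ\anti)(f)\big)\in Y=(\swap\circ\anti)(\ds_\cQ)$ by Theorem~\ref{thm:main}, and applying the inverse bijection $(\swap\circ\anti)^{-1}=\anti\circ\swap$ yields $\chi_B(f)\in\ds_\cQ$ for every $f\in\ls_\cQ$.
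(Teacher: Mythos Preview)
Your approach is essentially the paper's: use Theorem~\ref{thm:main} to reduce to showing that $\adari(\mpar)$ carries the $\swap\circ\anti$-image of $\ls_\cQ$ into that of $\ds_\cQ$, then invoke Ecalle's dimorphic transportation $\adari(\mpar)(\ARI_{\baral/\baral})=\ARI_{\baral/\baril}$, which the paper simply cites from \cite[Theorem~3.5]{kawamura252} rather than sketching inline. One small correction: by Remark~\ref{rem:alil} the space $\ds_\cQ$ equals $\lcL\cap\swap(\ARI_{\baral/\baril})$, not $\ARI_{\baral/\baril}$ itself; the paper bridges this by observing that $\anti$ preserves both alternality and alternility, so $\chi_B(f)\in\ds_\cQ$ is equivalent to $(\swap\circ\anti)(\chi_B(f))\in\ARI_{\baral/\baril}$, which is exactly the target you need.
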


\section*{Acknowledgements}
The author would like to thank Ayu Kawamura for her helpful suggestions on improving the terminology and exposition of this paper.

\section{Matthes--Tasaka's setting and flexions}
\subsection{Some sets of functions}
We basically adopt conventions of Matthes--Tasaka \cite{mt19} and the author's series \cite{kawamura25}, \cite{kawamura252}. The deck $\cF$ should be chosen to include the mutually-conjugate pair of flexion units
\[\Pa\binom{u_{1}}{v_{1}}\coloneqq\frac{1}{u_{1}},\quad\text{and}\quad\Pi\binom{u_{1}}{v_{1}}\coloneqq\frac{1}{v_{1}}.\]
The ground field $\bbK$ is taken as $\bbQ$ for comparing with Brown's work, but we note here that all flexion-theoretic arguments done in this note work in any ground field $\bbK$ with $\mathrm{ch}(\bbK)=0$. As we did in \cite{kawamura252}, the information about $\cF$ and $R$ is often omitted from several notations unless it matters.
We denote by $\BIMU^{\vcst}$ (resp.~$\BIMU^{\ucst}$) the subset of $\BIMU(R)$ consisting of all bimoulds which are indendent of variables in the lower layer (resp.~the upper layer).

\begin{definition}
    \begin{enumerate}
        \item Let $\cQ$ be the direct sum $\bigoplus_{k\in\bbZ}\cQ_{k}$ of the space
    \[\cQ_{k}\coloneqq\left\{\left.f=(f^{(r)})_{r}\in\prod_{r\ge 0}\bbQ(x_{1},\ldots,x_{r})~\right|~\deg(f^{(r)})=k-r~(r\ge 0)\right\}.\]
    Note that $\bbQ(x_{1},\ldots,x_{0})\coloneqq\bbQ$ and the element $0$ has any degree.
    \item The subset $\cG$ (resp.~$\cL$) of $\cQ$ is defined to be the set of all elements having $1$ (resp.~$0$) as its $r=0$ component.
    \item We prepare a copy $\ucQ$ (resp.~$\lcQ$) of $\cQ$ and regard it as a subset of $\BIMU^{\vcst}(\bbQ)$ (resp.~$\BIMU^{\ucst}(\bbQ)$) by the embedding determined by $x_{i}\mapsto u_{i}$ (resp.~$x_{i}\mapsto v_{i}$).
    \item Similarly we denote by $\ucG$, $\lcG$, $\ucL$ and $\lcL$ suitable copies of $\cG$ and $\cL$ being compatible with the definition of $\ucQ$ and $\ucL$.
    \end{enumerate}
\end{definition}

By the identification above, we say that an element $f=(f^{(r)})_{r}\in\ucQ$ (or $\lcQ$) has a component $f^{(r)}$ \emph{of length $r$}, which can have terms of several degrees. Note that $\ucQ$ and $\lcQ$ are not completed with respect to this length filtration, but it does not matter since we always discuss after regarding them as subsets of $\BIMU$. By definition, we have
\begin{gather}
    \ucG=\MU\cap\ucQ,\quad \ucL=\LU\cap\ucQ,\\
    \lcG=\MU\cap\lcQ,\quad \lcL=\LU\cap\lcQ.
\end{gather}

The two types of musical accidentals
    \begin{align}
        f^{\sharp}(x_{1},\ldots,x_{r})&\coloneqq f(x_{1},x_{1}+x_{2},\ldots,x_{1}+\cdots+x_{r}),\\
        f^{\flat}(x_{1},\ldots,x_{r})&\coloneqq f(x_{1},x_{2}-x_{1},\ldots,x_{r}-x_{r-1})    
    \end{align}
    in \cite[\S 2.4]{mt19}, are understood as maps $\sharp\colon\lcQ\to\ucQ$ and $\flat\colon\ucQ\to\lcQ$, respectively.

\begin{definition}[Ihara bracket; {\cite[Definition 4.1]{mt19}}]
    Let $f$ and $g$ be elements of $\lcL$.
    Then we define the Ihara bracket as the commutator
    \[\{f,g\}\coloneqq f\lpreiha g-g\lpreiha f\]
    of the Ihara action $\lpreiha$ given by the bilinearity and
    \begin{multline}
        (f^{(r)}\lpreiha g^{(s)})(x_{1},\ldots,x_{r+s})\\
        \coloneqq\sum_{i=0}^{s}f^{(r)}(x_{i+1}-x_{i},\ldots,x_{i+r}-x_{i})g^{(s)}(x_{1},\ldots,x_{i},x_{i+r+1},\ldots,x_{r+s})\\
+(-1)^{r}\sum_{i=1}^{s}f^{(r)}(x_{i+r}-x_{i+r-1},\ldots,x_{i+r}-x_{i})g^{(s)}(x_{1},\ldots,x_{i-1},x_{i+r},\ldots,x_{r+s}).
    \end{multline}
\end{definition}

The following assertion is obvious by definition.
\begin{lemma}\label{lem:lpreiha_ilat}
    For any $g\in\lcL$, its Ihara action is given by
    \[f\mapsto f\lpreiha g=\axit((\neg\circ\pari\circ\anti)(f),f)(g)+\mmu(f,g).\]
\end{lemma}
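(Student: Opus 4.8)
The plan is to prove the identity by a direct term-by-term comparison of the two defining expressions on each bihomogeneous piece. First I would fix components $f=f^{(r)}\in\lcL$ of length $r$ and $g=g^{(s)}$ of length $s$ and extract the length-$(r+s)$ part of both sides; since $\lpreiha$, $\axit$ and $\mmu$ are all defined component-wise, this loses no information. On the right-hand side I would first isolate the multiplication term: because $\mmu$ is the concatenation product, on these fixed lengths $\mmu(f,g)$ reduces to the single summand $f^{(r)}(x_{1},\ldots,x_{r})\,g^{(s)}(x_{r+1},\ldots,x_{r+s})$, and, using the convention $x_{0}=0$, this is exactly the $i=0$ term of the first sum defining $f\lpreiha g$. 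So the real content is that the surviving summands -- the terms $i=1,\ldots,s$ of the first sum together with the entire $(-1)^{r}$-weighted second sum -- reassemble into $\axit((\neg\circ\pari\circ\anti)(f),f)(g)$.

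Next I would unwind the defining flexion formula of the bi-parametrized operator $\axit(P,Q)$, which is a difference of two summations over factorizations $w=abc$ of the input word, one governed by the parameter $P$ and the other by $Q$, with the acted-on bimould evaluated on the outer blocks $a,c$ and the parameter evaluated on the flexed middle block $b$, the arguments being flexed by the markings $\lceil,\rceil,\lfloor,\rfloor$. Working in the lower-layer coordinates ($x_{i}\mapsto v_{i}$), where flexions on the $v$-row act by differences, I would match the $Q=f$ summation against the $i=1,\ldots,s$ part of the first Ihara sum: the acted-on bimould becomes $g^{(s)}(x_{1},\ldots,x_{i},x_{i+r+1},\ldots,x_{r+s})$ (the outer blocks $a=(x_1,\ldots,x_i)$ and $c=(x_{i+r+1},\ldots,x_{r+s})$), while the flexed middle block turns the parameter into $f^{(r)}(x_{i+1}-x_{i},\ldots,x_{i+r}-x_{i})$, i.e.\ flexion by the left boundary $x_{i}$. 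For the other summation I would check that the composite involution $\neg\circ\pari\circ\anti$ is exactly what converts $f$ into the parameter producing the second Ihara sum: $\anti$ reverses the argument order so that $f^{(r)}(x_{i+r}-x_{i+r-1},\ldots,x_{i+r}-x_{i})$ (flexion by the right boundary $x_{i+r}$) appears, $\neg$ accounts for the sign change of the lower-layer variables forced by the opposite flexion direction, and $\pari$ supplies exactly the prefactor $(-1)^{r}$.

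The main obstacle -- and essentially the only content beyond bookkeeping, which is why the statement may fairly be called obvious -- is the careful translation of Ecalle's flexion markings on the pairs $\binom{u_{i}}{v_{i}}$ into the concrete shifted arguments appearing in the definition of $\lpreiha$, together with the verification that the three involutions combine to yield both the correctly reversed flexed arguments and the correct global sign in the second sum. Once the markings are matched factorization by factorization, summing over $i$ and over all lengths yields the asserted equality and proves the lemma.
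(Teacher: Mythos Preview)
Your proposal is correct and is exactly the direct unfolding of definitions that the paper has in mind: the paper states that the lemma is ``obvious by definition'' and gives no proof, and your term-by-term matching of the $i=0$ summand with $\mmu(f,g)$, of the $\anit(f)$-part of $\axit$ with the remaining first sum, and of the $\amit((\neg\circ\pari\circ\anti)(f))$-part with the $(-1)^{r}$-weighted second sum, is precisely that verification.
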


Here we also introduce the framework of double shuffle equations used in \cite{brown17} and \cite{mt19}. The symbol $\sh$ denotes the usual shuffle product for variable sequences. We usually use the convention that an element of $\cQ$ behaves linearly for a linear combination of variable sequences. For example, we write
\[f((x_{1},x_{2})\sh (x_{3}))=f(x_{1},x_{2},x_{3})+f(x_{1},x_{3},x_{2})+f(x_{3},x_{1},x_{2}).\]
The harmonic product $\ast$ is defined to be a similar $\bbQ(x_{1},x_{2},\ldots)$-bilinear product which determined by the rules $\emp\sh\bx=\bx\sh\emp=\bx$ and
    \[((x_{i})\uplus\bx)\ast ((x_{j})\uplus\bx')=(x_{i})\uplus(\bx\ast ((x_{j})\uplus\bx'))+(x_{j})\uplus(((x_{i})\uplus\bx)\ast\bx')+\frac{1}{x_{i}-x_{j}}(\bx\ast\bx'),\]
where $\uplus$ denotes the concatenation of sequences.
\begin{definition}
    We define the subspace $\ls_{\cQ}\subseteq\lcL$ to be the set of every function $f\in\lcL$ satisfying\footnote{As pointed out in \cite{tasaka}, in the definition of the space $\mathfrak{pdmr}$ in \cite{brown17}, which should coincide with our $\ds_{\cQ}$, the parity condition (1) is missing.}
    \begin{enumerate}
        \item $f^{(1)}(x_{1})=f^{(1)}(-x_{1})$,
        \item $f^{(r+s)}((x_{1},\ldots,x_{r})\sh(x_{r+1},\ldots,x_{r+s}))=0$ for $r,s\ge 1$,
        \item $(f^{(r+s)})^{\sharp}((x_{1},\ldots,x_{r})\sh(x_{r+1},\ldots,x_{r+s}))=0$ for $r,s\ge 1$.
    \end{enumerate}
    Similarly, we denote by $\ds_{\cQ}\subseteq\lcL$ the subspace consisting of all function $f$ satisfying (1), (3) and
    \begin{equation}\label{eq:il}
        f^{(r+s)}((x_{1},\ldots,x_{r})\ast(x_{r+1},\ldots,x_{r+s}))=0
    \end{equation}
    for each $r,s\ge 1$.
\end{definition}

\begin{remark}\label{rem:alil}
By definition, $\ls_{\cQ}$ is equal to the intersection of $\lcL$ with $\ARI_{\baral/\baral}$. Moreover, the space $\ds_{\cQ}$ is $\lcL\cap\swap(\ARI_{\baral/\baril})$, where $\il$ denotes the $\Pi$-alternality ($=$ alternility). The latter coincidence is checked from Komiyama's result \cite{komiyama21} that the condition \eqref{eq:il} is essentially equivalent to the definition of alternility using $\ganit(\pic)$.
\end{remark}

From the above remark, we can show Corollary \ref{cor:main} assuming Theorem \ref{thm:main} as follows. Since the operator $\anti$ preserves alternality and alternility, it is sufficient to show that $(\swap\circ\anti\circ\chi_{B})(f)\in\ARI_{\baral/\baril}$ for any $f\in\ls_{\cQ}$. Since $f$ is an element of $\ARI_{\baral/\baral}$, so is $(\swap\circ\anti)(f)$. Thus, from Theorem \ref{thm:main}, we obtain
\[(\swap\circ\anti\circ\chi_{B})(f)\in\adari(\mpar)(\ARI_{\baral/\baral}).\]
The right-hand side is equal to $\ARI_{\baral/\baril}$, as shown in \cite[Theorem 3.5]{kawamura252}.

\subsection{Flexions}
By definition and the identification we gave in the previous subsection, we see that $\sharp=(\swap\circ\anti)|_{\lcQ}$ and $\flat=(\anti\circ\swap)|_{\ucQ}$ hold. Moreover, we prepare the definition of the $\ila$-bracket
\begin{align}
    \preila(A,B)&\coloneqq\ilat(B)(A)+\mmu(A,B)\coloneqq\axit(B,(\neg\circ\pari\circ\anti)(B))(A),\\
    \ila(A,B)&\coloneqq\preila(A,B)-\preila(B,A),
\end{align}
for $A,B\in\LU$.

\begin{lemma}\label{lem:ihara_ila}
    Let $f,g\in\lcL$. Then we have
    \[\{f,g\}=\anti(\ila(\anti(g),\anti(f))).\]
\end{lemma}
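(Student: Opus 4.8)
The plan is to exploit that both sides are commutators of an underlying product, so that the full identity collapses to a single ``pre-bracket'' identity. Since $\{f,g\}=f\lpreiha g-g\lpreiha f$ and $\ila(A,B)=\preila(A,B)-\preila(B,A)$, applying the linear map $\anti$ gives
\[\anti(\ila(\anti(g),\anti(f)))=\anti(\preila(\anti(g),\anti(f)))-\anti(\preila(\anti(f),\anti(g))).\]
Hence, by the symmetry $f\leftrightarrow g$, it suffices to prove the single identity
\[f\lpreiha g=\anti(\preila(\anti(g),\anti(f))),\]
for then interchanging $f$ and $g$ yields $g\lpreiha f=\anti(\preila(\anti(f),\anti(g)))$, and subtracting the two gives the claim.

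First I would expand both sides of this reduced identity. Writing $\sigma\coloneqq\neg\circ\pari\circ\anti$, Lemma \ref{lem:lpreiha_ilat} gives $f\lpreiha g=\axit(\sigma(f),f)(g)+\mmu(f,g)$, while the definition of $\preila$ together with the involutivity $\anti\circ\anti=\id$ (which turns $\sigma(\anti(f))$ into $(\neg\circ\pari)(f)$) gives
\[\preila(\anti(g),\anti(f))=\axit(\anti(f),(\neg\circ\pari)(f))(\anti(g)).\]
The mould-multiplication part is then matched directly: $\anti$ is an anti-automorphism for $\mmu$, so that $\anti(\mmu(\anti(g),\anti(f)))=\mmu(f,g)$, which accounts for the $\mmu(f,g)$ summand of $f\lpreiha g$. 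Stripping off these $\mmu$-terms reduces everything to the purely flexion-theoretic identity
\[\axit(\sigma(f),f)(g)=\anti\bigl(\ilat(\anti(f))(\anti(g))\bigr),\]
that is, to a conjugation formula expressing that conjugating the $\ila$-action by $\anti$ produces the Ihara action with its two arguments interchanged.

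The heart of the proof, and the step I expect to be the main obstacle, is precisely this conjugation formula for the flexion operator $\axit$ under $\anti$. I would establish it by unwinding the explicit flexion formulas. As recorded in the defining two-term expression for $\lpreiha$ in the Ihara bracket definition, the operator $\axit(a,b)(M)$ is a sum of two families of flexion products, one obtained by offsetting consecutive subsequences from the left (fed by $a$) and one from the right (fed by $b$, carrying the sign $(-1)^{\bullet}$); the degenerate left term is exactly the $\mmu$ contribution separated out above. The map $\anti$ reverses the variable word, with the accompanying sign and parity bookkeeping encoded by $\neg$ and $\pari$. Reversing the word interchanges the two offset directions, hence swaps the two argument slots of $\axit$ while replacing each argument by its $(\neg\circ\pari)$-twist; carrying out this comparison term by term is the technical core. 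Once this conjugation identity is verified, the reduced identity, and therefore Lemma \ref{lem:ihara_ila}, follow at once.
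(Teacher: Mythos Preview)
Your approach is correct and essentially the same as the paper's: both reduce the statement to the conjugation identity
\[\axit((\neg\circ\pari\circ\anti)(f),f)(g)=(\anti\circ\ilat(\anti(f))\circ\anti)(g),\]
your pre-bracket identity being equivalent to this after stripping off the $\mmu$-terms exactly as you describe. The one place the paper is cleaner is the final step: rather than unwinding the explicit flexion formulas for $\axit$ term by term, it invokes the standard identity $\anti\circ\amit(\anti(A))\circ\anti=\anit(A)$ (and its companion $\anti\circ\anit(\anti(A))\circ\anti=\amit(A)$), from which the conjugation formula follows in two lines by decomposing $\ilat=\amit+\anit\circ(\neg\circ\pari\circ\anti)$.
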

\begin{proof}
    Since $\anti$ is an anti-homomorphism with respect to the $\mmu$-product \cite[Lemma 2.8]{kawamura25}, we have
    \begin{align}
        &\anti(\ila(\anti(g),\anti(f)))\\
        &=\begin{multlined}[t](\anti\circ\ilat(\anti(f))\circ\anti)(g)-(\anti\circ\ilat(\anti(g))\circ\anti)(f)\\+\anti(\mmu(\anti(g),\anti(f)))-\anti(\mmu(\anti(f),\anti(g)))\end{multlined}\\
        &=(\anti\circ\ilat(\anti(f))\circ\anti)(g)-(\anti\circ\ilat(\anti(g))\circ\anti)(f)+\mmu(f,g)-\mmu(g,f),
    \end{align}
    while the Ihara bracket is computed by Lemma \ref{lem:lpreiha_ilat} as
    \begin{align}
        \{f,g\}
        &=f\lpreiha g-g\lpreiha f\\
        &=\axit((\neg\circ\pari\circ\anti)(f),f)(g)-\axit((\neg\circ\pari\circ\anti)(g),g)(f)+\mmu(f,g)-\mmu(g,f).
    \end{align}
    Hence it is sufficient to show the equality
    \begin{equation}\label{eq:ihara_ila_proof}
        (\anti\circ\ilat(\anti(f))\circ\anti)(g)
        =\axit((\neg\circ\pari\circ\anti)(f),f)(g)
    \end{equation}
    because this identity itself and the one after interchanging $f\leftrightarrow g$ yield the desired equality.
    Since $\anti\circ\amit(\anti(A))\circ\anti=\anit(A)$ holds for any $A\in\LU$, we have
    \begin{align}
        &(\anti\circ\ilat(\anti(f))\circ\anti)(g)\\
        &=(\anti\circ\amit(\anti(f))\circ\anti)(g)+(\anti\circ\anit((\neg\circ\pari)(f))\circ\anti)(g)\\
        &=\anit(f)(g)+\amit((\neg\circ\pari\circ\anti)(f))(g)\\
        &=\axit((\neg\circ\pari\circ\anti)(f),f)(g),
    \end{align}
    and it completes the proof.
\end{proof}

Let us here review the auxilirary operation $\upreiha$ and the bracket $\{-,-\}_{\ari}$ treated in \cite{mt19}, by describing how they related to the original flexion structure instead of directly introducing their definition. As refered in \cite[\S 3.2]{mt19}, they coincide the usual $\preari$ and $\ari$-bracket restricted to $\BIMU^{\vcst}$ and $\LU^{\vcst}$ immediately shown as
\[f\upreiha g=\preari(g,f),\quad \{f,g\}_{\ari}=\ari(g,f)\]
for any $f,g\in\ucL$.
We also note that, the space $\cV$ introduced in \cite[\S 4.2]{mt19} coincides with $\ARI_{\mantar}^{\ucst}\coloneqq\ARI_{\mantar}\cap\LU^{\ucst}$, since $\cV$ is defined to be the space of all $f\in\cL$ satisfying in any length
\[0=f(x_{1},\ldots,x_{r})+(-1)^{r}f(x_{1},\ldots,x_{r})=(f-\mantar(f))(x_{1},\ldots,x_{r}).\]
Hence the following corollary gives another proof of \cite[Proposition 4.5]{mt19}.

\begin{corollary}[{\cite[Proposition 4.5]{mt19}}]\label{cor:mt45}
    We have $\{f,g\}_{\ari}^{\flat}=\{f^{\flat},g^{\flat}\}$ for all $f,g\in\cV$. 
\end{corollary}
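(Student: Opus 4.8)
The plan is to collapse both sides of the claimed identity to a single intertwining relation between the $\ari$-bracket and the $\ila$-bracket under $\swap$, and then to prove that relation using Lemma \ref{lem:ihara_ila} together with the $\mantar$-invariance that defines $\cV$. Throughout, $f,g\in\cV\subseteq\ucL$, so the $\ari$-bracket lands in $\ucL$ and its $\flat$-image, as well as $f^{\flat},g^{\flat}$, lie in $\lcL$, where the Ihara bracket is defined.

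First I would unwind the left-hand side. Since $\flat=(\anti\circ\swap)|_{\ucQ}$ and $\{f,g\}_{\ari}=\ari(g,f)$ for $f,g\in\ucL$, we have
\[
\{f,g\}_{\ari}^{\flat}=(\anti\circ\swap)(\ari(g,f)).
\]
For the right-hand side, I would apply Lemma \ref{lem:ihara_ila} to $f^{\flat},g^{\flat}\in\lcL$, obtaining
\[
\{f^{\flat},g^{\flat}\}=\anti\bigl(\ila(\anti(g^{\flat}),\anti(f^{\flat}))\bigr).
\]
Because $\anti$ is an involution and $f^{\flat}=(\anti\circ\swap)(f)$, one has $\anti(f^{\flat})=\swap(f)$ and likewise $\anti(g^{\flat})=\swap(g)$, so that $\{f^{\flat},g^{\flat}\}=\anti(\ila(\swap(g),\swap(f)))$. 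Cancelling the outer $\anti$ (again an involution), the corollary becomes equivalent to the single identity
\[
\swap(\ari(g,f))=\ila(\swap(g),\swap(f))\qquad(f,g\in\cV).
\]

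To establish this reduced identity I would expand both brackets through their ``pre'' versions, writing $\ari(g,f)=\preari(g,f)-\preari(f,g)$ with $\preari(A,B)=\arit(B)(A)+\mmu(A,B)$, and symmetrically for $\ila$ via $\preila(A,B)=\ilat(B)(A)+\mmu(A,B)$. The $\mmu$-contributions match directly from the known compatibility of $\swap$ and $\anti$ with $\mmu$ combined with the order reversal already built into both brackets, so the substance reduces to the flexion comparison $\swap\circ\arit(f)=\ilat(\swap(f))\circ\swap$ on the relevant arguments. Here I would use the expression $\ilat(B)=\amit(B)+\anit((\neg\circ\pari\circ\anti)(B))$ extracted from the proof of Lemma \ref{lem:ihara_ila}, turning the comparison into a statement about how $\swap$ conjugates $\amit$ and $\anit$, and then invoke the $\mantar$-invariance of $f,g\in\cV=\ARI_{\mantar}^{\ucst}$ to identify the datum $(\neg\circ\pari\circ\anti)(\swap(f))$ with the one carried by $\arit(f)$.

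I expect this flexion comparison to be the main obstacle: whereas the reductions are formal, matching $\swap\circ\arit$ with $\ilat\circ\swap$ requires careful tracking of the two-sum structure of the flexion action and of the distinct flexion units $\Pa$ and $\Pi$, and it is precisely the $\mantar$-invariance built into $\cV$ (rather than mere membership in $\ucL$) that forces the correction terms to cancel. Once this intertwining is in place the corollary follows, yielding the promised alternative route to \cite[Proposition 4.5]{mt19} that replaces a direct mould computation by Lemma \ref{lem:ihara_ila}.
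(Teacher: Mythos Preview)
Your reduction is exactly the one the paper makes: after unwinding $\flat=\anti\circ\swap$ and applying Lemma~\ref{lem:ihara_ila}, everything boils down to
\[
\swap(\ari(g,f))=\ila(\swap(g),\swap(f))\qquad(f,g\in\ARI_{\mantar}).
\]
The problem is what comes next. Your assertion that ``the $\mmu$-contributions match directly from the known compatibility of $\swap$ and $\anti$ with $\mmu$'' is false: $\swap$ is \emph{not} a homomorphism (or anti-homomorphism) for $\mmu$, so $\swap(\mmu(g,f)-\mmu(f,g))$ and $\mmu(\swap(g),\swap(f))-\mmu(\swap(f),\swap(g))$ differ in general. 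Consequently the intertwining relation you isolate, $\swap\circ\arit(f)=\ilat(\swap(f))\circ\swap$, is not an identity of operators, and your outline cannot be completed as stated.

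What actually happens, and what the paper does, is governed by Schneps' identities \eqref{eq:amit_swap} and \eqref{eq:anit_swap}. The first of these says
\[
(\swap\circ\amit(\swap(A))\circ\swap)(B)=\amit(A)(B)+\mmu(B,A)-\swap(\mmu(\swap(B),\swap(A))),
\]
so conjugating $\amit$ by $\swap$ produces precisely the $\mmu$-correction terms that account for the discrepancy you swept away. On the $\anit$ side, \eqref{eq:anit_swap} gives $(\swap\circ\anit(\swap(A))\circ\swap)(B)=\anit(\push(A))(B)$, and one then needs the identity $\push=\neg\circ\mantar\circ\swap\circ\mantar\circ\swap$ to convert $(\neg\circ\pari\circ\anti\circ\swap)(f)$ into $\mantar(f)$; only at that point does the $\mantar$-invariance of $f,g\in\cV$ enter and close the computation. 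So the architecture you propose is right, but the two concrete inputs you are missing are Schneps' $\swap$/$\amit$ identity (with its $\mmu$-corrections) and the $\push$ formula; without them the ``flexion comparison'' step does not go through.
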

\begin{proof}
    By Lemma \ref{lem:ihara_ila}, the assertion follows by showing that 
    \begin{equation}\label{eq:mt_45} (\anti\circ\swap)(\ari(G,F))=\anti(\ila(\swap(G),\swap(F)))
\end{equation}
    for all $F,G\in\ARI_{\mantar}$.
    Using Schneps' identities \cite[(2.4.7), (2.4.8)]{schneps15}, namely
    \begin{equation}\label{eq:amit_swap}
        (\swap\circ\amit(\swap(A))\circ\swap)(B)=\amit(A)(B)+\mmu(B,A)-\swap(\mmu(\swap(B),\swap(A)))
    \end{equation}
    and
    \begin{equation}\label{eq:anit_swap}
        (\swap\circ\anit(\swap(A))\circ\swap)(B)=\anit(\push(A))(B),
    \end{equation}
    that are valid for any $A\in\LU$ and $B\in\BIMU$, we see that
    \begin{align}
        &\ila(\swap(G),\swap(F))\\
        &=\ilat(\swap(F))(\swap(G))-\ilat(\swap(G))(\swap(F))+\swap(\swamu(G,F))-\swap(\swamu(F,G))\\
        &=\begin{multlined}[t]\amit(\swap(F))(\swap(G))-\amit(\swap(G))(\swap(F))+\anit((\neg\circ\pari\circ\anti\circ\swap)(F))(\swap(G))\\-\anit((\neg\circ\pari\circ\anti\circ\swap)(G))(\swap(F))+\swap(\swamu(G,F))-\swap(\swamu(F,G))\end{multlined}\\
        &=\begin{multlined}[t]\swap(\amit(F)(G))+\swap(\mmu(G,F))-\swap(\amit(G)(F))-\swap(\mmu(F,G))\\+\swap(\anit((\push\circ\swap\circ\neg\circ\pari\circ\anti\circ\swap)(F))(G))\\-\swap(\anit((\push\circ\swap\circ\neg\circ\pari\circ\anti\circ\swap)(G))(F))\end{multlined}\\
        &=\begin{multlined}\swap(\amit(F)(G))+\swap(\mmu(G,F))-\swap(\amit(G)(F))-\swap(\mmu(F,G))\\-\swap(\anit((\push\circ\swap\circ\neg\circ\mantar\circ\swap)(F))(G))\\+\swap(\anit((\push\circ\swap\circ\neg\circ\mantar\circ\swap)(G))(F))\end{multlined}\\
        &=\begin{multlined}[t]\swap(\amit(F)(G))+\swap(\mmu(G,F))-\swap(\amit(G)(F))-\swap(\mmu(F,G))\\-\swap(\anit(\mantar(F))(G))+\swap(\anit(\mantar(G))(F))\end{multlined}
    \end{align}
    where in the last equality we used the identity
    \[\push=\neg\circ\mantar\circ\swap\circ\mantar\circ\swap.\]
    Since $F$ and $G$ are assumed to be $\mantar$-invariant, the above computation finally veryfies
    \begin{align}
        &\ila(\swap(G),\swap(F))\\
        &=\begin{multlined}[t]\swap(\amit(F)(G))+\swap(\mmu(G,F))-\swap(\amit(G)(F))\\-\swap(\mmu(F,G))-\swap(\anit(F)(G))+\swap(\anit(G)(F))\end{multlined}\\
        &=\swap(\preari(G,F)-\preari(F,G))\\
        &=\swap(\ari(G,F)),
    \end{align}
    which shows \eqref{eq:mt_45}.
\end{proof}

\section{Dilators}
We recall the definition of $\gari$-dilators from \cite[(28)]{ecalle15}.

\begin{definition}[{\cite[(17)]{ecalle15}}]
    For $S\in\MU$, we define the $\gari$-dilator $\di S\in\LU$ of $S$ by the equality
    \[\der(S)=\preari(S,\di S).\]
\end{definition}

Since the linearization of the $\gari$-product gives the $\preari$ operation
\[\gari(A,1+\ep B)=A+\ep\preari(A,B)\quad (\mod~\ep^{2})\]
as shown in \cite[(2.8.4)]{schneps15}, we can say that the dilator is also defined as
\begin{equation}\label{eq:gari_dilator}
1+\ep\di S=\gari(\invgari(S),\exp(\ep\der)(S)).
\end{equation}

This section is devoted to give a proof for the following theorem, which is the most essential part for the proof of the main theorem \ref{thm:main}. For $A\in\LU$, we denote by $\bari(A)$ the adjoint action on the Lie algebra $\ARI$:
\[\bari(A)(B)\coloneqq\ari(A,B)\quad (A,B\in\LU).\]

\begin{theorem}[{\cite[(42)]{ecalle15}}]\label{thm:main2}
    Let $S$ be an element of $\MU$. Then we have the following identities of linear maps $\LU\to\LU$:
    \begin{align}\label{eq:main_theorem}
        \begin{split}
        \adari(S)&=\sum_{s=0}^{\infty}\sum_{r_{1},\ldots,r_{s}\ge 1}\frac{1}{r_{1}(r_{1}+r_{2})\cdots (r_{1}+\cdots+r_{s})}\bari(\leng_{r_{1}}(\di S))\circ\cdots\circ\bari(\leng_{r_{s}}(\di S)),\\
        \adari(S)&=\sum_{s=0}^{\infty}\sum_{r_{1},\ldots,r_{s}\ge 1}\frac{1}{r_{1}(r_{1}+r_{2})\cdots (r_{1}+\cdots+r_{s})}\bari(\leng_{r_{s}}(\di \ri S))\circ\cdots\circ\bari(\leng_{r_{1}}(\di \ri S)),
        \end{split}
    \end{align}
    where $\ri S$ denotes the $\gari$-inverse $\invgari(S)$ of $S$ and the $s=0$ terms in the above sums are understood as the identity.
\end{theorem}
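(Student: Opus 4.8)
The plan is to realize $\adari(S)$ as the value at $t=1$ of a one-parameter family of Lie algebra automorphisms and to expand that family as a time-ordered (Dyson) series; the coefficients $\tfrac{1}{r_1(r_1+r_2)\cdots(r_1+\cdots+r_s)}$ will then emerge as volumes of simplices. Concretely, for a formal parameter $t$ let $S_t$ denote the image of $S$ under the dilation that rescales each length-$r$ component by $t^r$; this is a one-parameter group of automorphisms of the $\gari$-structure, so $S_t\in\MU$ with $S_0=1$, $S_1=S$, and the same dilation sends $\di S$ to $\di S_t$ with $\leng_r(\di S_t)=t^r\,\leng_r(\di S)$. Since the dilation is generated by the length grading, Euler's identity gives $t\tfrac{d}{dt}S_t=\der(S_t)$; combined with the defining relation $\der(S_t)=\preari(S_t,\di S_t)$ and the linearization $\gari(A,1+\ep B)=A+\ep\,\preari(A,B)$, this shows that $\tfrac{1}{t}\di S_t$ is the right logarithmic derivative of $S_t$ along the dilation flow.

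Next I would set $\Phi(t)\coloneqq\adari(S_t)$, a linear endomorphism of $\LU$ with $\Phi(0)=\id$. Because $\adari$ is a homomorphism from the $\gari$-group into $\GL(\ARI)$ whose linearization at the identity is $\bari$, the previous paragraph yields the linear differential equation $\tfrac{d}{dt}\Phi(t)=\Phi(t)\circ\tfrac{1}{t}\bari(\di S_t)=\Phi(t)\circ\sum_{r\ge 1}t^{r-1}\bari(\leng_r(\di S))$. Solving this by Picard iteration produces $\Phi(1)$ as the time-ordered exponential $\sum_{s\ge 0}\int_{0\le t_1\le\cdots\le t_s\le 1}\prod_{j}\bigl(\sum_{r_j\ge 1}t_j^{r_j-1}\bari(\leng_{r_j}(\di S))\bigr)\,dt_1\cdots dt_s$, with the smallest time appearing leftmost. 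Carrying out the elementary simplex integral $\int_{0\le t_1\le\cdots\le t_s\le 1}\prod_j t_j^{r_j-1}\,dt_j=\tfrac{1}{r_1(r_1+r_2)\cdots(r_1+\cdots+r_s)}$ gives precisely the first identity. One also notes that the expansion is well defined length by length, since every factor $\bari(\leng_{r_j}(\di S))$ strictly raises length, so only finitely many tuples $(s,r_1,\ldots,r_s)$ contribute in a fixed length.

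For the second identity I would run the same argument from the other side. Rewriting the equation as $\tfrac{d}{dt}\Phi(t)=\tfrac{1}{t}\bari(\eta_t)\circ\Phi(t)$, where $\eta_t=\Phi(t)(\di S_t)=\adari(S_t)(\di S_t)$ is the left logarithmic derivative and the intertwining rule $\Phi(t)\circ\bari(X)=\bari(\Phi(t)(X))\circ\Phi(t)$ (valid since $\adari(S_t)$ is an automorphism of $(\ARI,\ari)$) is used, Picard iteration now places the largest time leftmost and hence reverses the composition order. The remaining point is to identify $\eta_t$ with the dilator of the $\gari$-inverse: applying the dilator relation to $\ri S_t=\invgari(S_t)$ and using the group identity that the right logarithmic derivative of $\ri S_t$ equals minus the left logarithmic derivative of $S_t$, one expresses $\eta_t$ through $\di\ri S_t$, whose length-$r$ component again carries the factor $t^r$. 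Substituting and evaluating the same simplex integral then yields the second identity with the reversed order and $\di\ri S$ in place of $\di S$.

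The step I expect to be the main obstacle is precisely this reconciliation in the second half: pinning down the exact relation between $\eta_t$ and $\di\ri S$, that is, matching the sign, the $\invgari$, and the composition order against Ecalle's normalizations of $\adari$, $\preari$ and $\di$, so that the global sign collapses to the clean form stated (and so that which of the two formulas is the ``forward'' one is fixed by whether $\adari$ is multiplicative or anti-multiplicative in the chosen convention). The forward direction is, by contrast, essentially forced once the ODE is in place; the genuine content there is verifying the differential equation itself, namely that the $\gari$-dilator is literally the logarithmic derivative of $\adari$ along the dilation flow, which rests on $\adari(S_t)$ being a Lie algebra automorphism and on the linearization formula for $\gari$.
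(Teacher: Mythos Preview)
Your approach is correct and is, at bottom, the same argument the paper gives, recast in the language of ODEs rather than commutators. The paper's key step is Lemma~3.4, the identity $[\der,\adari(S)]=\adari(S)\circ\bari(\di S)$; your key step is the ODE $t\tfrac{d}{dt}\Phi(t)=\Phi(t)\circ\bari(\di S_t)$ for $\Phi(t)=\adari(S_t)$. These are literally the same statement: since the length dilation $D_t$ is an automorphism of the whole flexion package, one has $\adari(S_t)=D_t\circ\adari(S)\circ D_t^{-1}$ and $t\tfrac{d}{dt}D_t=\der\circ D_t$, whence $t\tfrac{d}{dt}\Phi(t)=[\der,\Phi(t)]$; applying the paper's commutator identity to $S_t$ recovers your ODE, and conversely. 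Likewise, your Picard iteration (with the length--raising observation guaranteeing termination in each graded piece) is precisely the paper's inductive uniqueness argument, which recursively determines $f_{r;s}=\leng_{r+s}\circ f|_{\BIMU_r}$ from lower $s$; the simplex integral $\int_{0\le t_1\le\cdots\le t_s\le 1}\prod t_j^{r_j-1}\,dt_j=\prod_k(r_1+\cdots+r_k)^{-1}$ is the closed form of that recursion. So the difference is one of packaging: the paper works purely with the grading operator $\der$ and a length induction, while you exponentiate $\der$ to a one--parameter flow and write the same recursion as a Dyson series, which has the advantage of producing the coefficients at a glance.

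Your instinct about the second identity is on target. The required input is exactly what the paper records as the second equality of Lemma~3.4, $[\der,\adari(S)]=-\bari(\di\ri S)\circ\adari(S)$, equivalently $\adari(S)(\di S)=-\di\ri S$; this is precisely the group identity ``right log derivative of $\ri S_t$ equals minus the left log derivative of $S_t$'' that you invoke. Once you substitute $\eta_t=-\di\ri S_t$ and run the left Picard series, the ordering reverses as you say; the only thing to be careful about is tracking the resulting $(-1)^s$ against the composition order and the label permutation, and this is indeed where the paper is terse (its Lemma~3.3 for $R(S)$ is stated without the details). There is no missing idea in your plan---only the bookkeeping you already flagged.
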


\begin{lemma}\label{lem:der_ari}
    The linear operator $\der$ gives a derivation with respect to the binary operator $\preari$. Consequently it also gives a derivation on the Lie algebra $\ARI$.
\end{lemma}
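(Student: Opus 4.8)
The plan is to recognize $\der$ as the infinitesimal generator of the length grading and to reduce the Leibniz rule to the homogeneity of $\preari$ with respect to that grading. Writing a bimould $M\in\LU$ as the sum $M=\sum_{r\ge 0}\leng_{r}(M)$ of its length-homogeneous components, the operator $\der$ acts by $\der(M)=\sum_{r\ge 0}r\,\leng_{r}(M)$, i.e.\ it rescales the length-$r$ part by the factor $r$; this is exactly the derivation whose flow $\exp(\ep\der)$ appears in \eqref{eq:gari_dilator}. The single structural input I would isolate is that $\preari$ is additive in length: if $A$ has pure length $r$ and $B$ has pure length $s$, then $\preari(A,B)$ has pure length $r+s$.

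To establish this length-additivity I would unwind $\preari(A,B)=\arit(B)(A)+\mmu(A,B)$. The $\mmu$-product concatenates the two argument sequences, so it manifestly sends $(r,s)$ to $r+s$; and each term in the flexion formula for $\arit(B)(A)$ glues one length-$s$ block built from $B$ onto the length-$r$ data of $A$, again producing total length $r+s$. This bookkeeping is the only genuine, if entirely routine, verification in the argument, and it is the step I would write out most carefully.

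Granting length-additivity, the Leibniz rule is a purely formal consequence of the grading: for homogeneous $A=\leng_{r}(A)$ and $B=\leng_{s}(B)$ one has
\[
\der(\preari(A,B))=(r+s)\,\preari(A,B)=r\,\preari(A,B)+s\,\preari(A,B)=\preari(\der A,B)+\preari(A,\der B),
\]
and since the identity is checked in each fixed total length separately there is no convergence issue; the general statement then follows by bilinearity of $\preari$ and linearity of $\der$.

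For the final clause, I would use that the $\ARI$-bracket is the antisymmetrization $\ari(A,B)=\preari(A,B)-\preari(B,A)$. Applying $\der$ and invoking the derivation property just proved for $\preari$, the four resulting terms regroup as
\[
\der(\ari(A,B))=\ari(\der A,B)+\ari(A,\der B),
\]
so $\der$ is indeed a derivation of the Lie algebra $\ARI$. In short, the entire proof hinges on the length-homogeneity of $\preari$; once that is recorded, both the $\preari$-Leibniz rule and its antisymmetrized consequence are immediate.
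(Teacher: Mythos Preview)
Your proposal is correct and is essentially the same argument as the paper's: both reduce the Leibniz rule for $\der$ to the length-additivity of $\preari$, with the paper spelling this out explicitly for the constituents $\mmu$, $\amit$, and $\anit$ while you phrase it as a single homogeneity statement for $\preari$. The final passage to $\ari$ by antisymmetrization is identical.
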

\begin{proof}
    Let $A$ and $B$ be two elements of $\BIMU$. Since $\der$ gives a derivation of the associative algebra $\BIMU$ as
    \begin{align}
        &(\mmu(\der(A),B)+\mmu(A,\der(B)))(w_{1}\cdots w_{r})\\
        &=\sum_{i=0}^{r}\der(A)(w_{1}\cdots w_{i})B(w_{i+1}\cdots w_{r})+\sum_{i=0}^{r}A(w_{1}\cdots w_{i})\der(B)(w_{i+1}\cdots w_{r})\\
        &=\sum_{i=0}^{r}iA(w_{1}\cdots w_{i})B(w_{i+1}\cdots w_{r})+\sum_{i=0}^{r}(r-i)A(w_{1}\cdots w_{i})B(w_{i+1}\cdots w_{r})\\
        &=r\sum_{i=0}^{r}A(w_{1}\cdots w_{i})B(w_{i+1}\cdots w_{r})\\
        &=\der(\mmu(A,B))(w_{1}\cdots w_{r}),
    \end{align}
    it suffices to show the identity
    \[\der(\arit(B)(A))=\arit(\der(B))(A)+\arit(B)(\der(A))\]
    in the case where $B\in\LU$. Indeed we can show the equality
    \[\der(\amit(B)(A))=\amit(\der(B))(A)+\amit(B)(\der(A))\]
    by a simple computation as
    \begin{align}
        &(\amit(\der(B))(A)+\amit(B)(\der(A)))(\bw)\\
        &=\sum_{\substack{\bw=\ba\bb\bc\\ \bc\neq\emp}}\left(A(\ba\ful{\bb}\bc)\der(B)(\bb\flr{\bc})+\der(A)(\ba\ful{\bb}\bc)B(\bb\flr{\bc})\right)\\
        &=\sum_{\substack{\bw=\ba\bb\bc\\ \bc\neq\emp}}\left(\ell(\bb)\cdot A(\ba\ful{\bb}\bc)B(\bb\flr{\bc})+(\ell(\ba)+\ell(\bc))A(\ba\ful{\bb}\bc)B(\bb\flr{\bc})\right)\\
        &=\ell(\bw)\sum_{\substack{\bw=\ba\bb\bc\\ \bc\neq\emp}}A(\ba\ful{\bb}\bc)B(\bb\flr{\bc})\\
        &=\der(\amit(B)(A))(\bw),
    \end{align}
    as well as the identity
    \[\der(\anit(B)(A))=\anit(\der(B))(A)+\anit(B)(\der(A))\]    
    by a quite similar method.
\end{proof}

In the following, we denote by $L(S)$ and $R(S)$ the right-hand sides of \eqref{eq:main_theorem}, respectively.

\begin{lemma}\label{lem:key}
    For any $S\in\MU$, we have identitites
    \[[\der,L(S)]=L(S)\circ\bari(\di S)\]
    and
    \[[\der,R(S)]=-\bari(\di S)\circ R(S)\]
    of linear operators on $\LU$.
\end{lemma}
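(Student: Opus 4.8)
The plan is to reduce both identities to the single observation that each elementary factor $\bari(\leng_{r}(X))$ is an eigenoperator of the commutator $[\der,-]$. Two inputs feed into this. First, Lemma \ref{lem:der_ari} says that $\der$ is a derivation of the bracket $\ari$, so $\der(\ari(A,B))=\ari(\der A,B)+\ari(A,\der B)$; rewriting this in terms of $\bari$ gives the commutation relation $[\der,\bari(A)]=\bari(\der A)$ for every $A\in\LU$. Second, the explicit description of $\der$ used in the proof of Lemma \ref{lem:der_ari} shows that $\der$ multiplies the length-$r$ part of any bimould by $r$, i.e. $\der\circ\leng_{r}=r\,\leng_{r}$. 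Combining the two yields
\[[\der,\bari(\leng_{r}(X))]=\bari(\der\,\leng_{r}(X))=r\,\bari(\leng_{r}(X)),\]
so $\bari(\leng_{r}(X))$ carries $\der$-weight $r$.

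First I would treat $L(S)$. Since $[\der,-]$ is a derivation of the algebra of linear operators on $\LU$, the Leibniz rule and the weight computation give, for each tuple $(r_{1},\dots,r_{s})$,
\[[\der,\bari(\leng_{r_{1}}(\di S))\circ\cdots\circ\bari(\leng_{r_{s}}(\di S))]=(r_{1}+\cdots+r_{s})\,\bari(\leng_{r_{1}}(\di S))\circ\cdots\circ\bari(\leng_{r_{s}}(\di S)).\]
Applying this termwise to $L(S)$, the scalar $r_{1}+\cdots+r_{s}$ multiplies the coefficient $1/(r_{1}(r_{1}+r_{2})\cdots(r_{1}+\cdots+r_{s}))$ and cancels its last factor, leaving $1/(r_{1}(r_{1}+r_{2})\cdots(r_{1}+\cdots+r_{s-1}))$. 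This is exactly the coefficient attached in $L(S)$ to the shorter composition $\bari(\leng_{r_{1}}(\di S))\circ\cdots\circ\bari(\leng_{r_{s-1}}(\di S))$, while the surviving rightmost factor $\bari(\leng_{r_{s}}(\di S))$, summed over $r_{s}\ge 1$, reassembles into $\bari(\di S)$. Re-indexing $s\mapsto s-1$ then gives $[\der,L(S)]=L(S)\circ\bari(\di S)$. Conceptually this is just the differential equation satisfied by the path-ordered exponential defining $L(S)=\adari(S)$ along the length-scaling path, whose derivative is right multiplication by the generator.

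For $R(S)$ the same weight computation applies verbatim, the only difference being that the composition factors and the partial sums are now read off from the opposite end, so the identical telescoping produces a factor on the \emph{left}: it yields $[\der,R(S)]=\bari(\di\,\ri S)\circ R(S)$, expressed through the dilator $\di\,\ri S$ of the $\gari$-inverse. To bring this to the stated form one must trade the dilator of $\ri S$ for that of $S$. This is the inverse-dilator relation: applying the derivation $\der$ to the identity $\gari(S,\ri S)=\mathrm{(unit)}$, using that $\der$ is compatible with the $\gari$-law (a consequence of Lemma \ref{lem:der_ari}, since the $\gari$-structure is governed by $\preari$ and $\mmu$), gives $\di\,\ri S=-\adgari(S)(\di S)$; feeding this together with the conjugation-equivariance of $\bari$ into the left factor converts $\bari(\di\,\ri S)\circ R(S)$ into $-\bari(\di S)\circ R(S)$.

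The step I expect to be the main obstacle is precisely this last conversion in the $R(S)$ case. The telescoping and re-indexing are routine once the eigenoperator property is established, and the $L(S)$ identity drops out cleanly. By contrast, turning the naturally occurring left factor $\bari(\di\,\ri S)$ into the asserted $-\bari(\di S)$ forces one to pass through the inverse-dilator relation and to track the signs and the left/right placement of $\bari(\di S)$ carefully; it is here that the interaction between the $\gari$-group structure and the $\ari$-Lie structure genuinely enters, rather than in the combinatorics of the series itself.
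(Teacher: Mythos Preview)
Your argument for $L(S)$ is exactly the paper's: both identify $[\der,\bari(\leng_{r}(\di S))]=r\,\bari(\leng_{r}(\di S))$ via Lemma~\ref{lem:der_ari}, apply Leibniz termwise, cancel the last partial sum in the coefficient, and peel off the rightmost factor $\sum_{r_{s}}\bari(\leng_{r_{s}}(\di S))=\bari(\di S)$.

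For $R(S)$ you diverge from the paper, and the divergence is caused by a typo in the statement rather than by a genuine mathematical obstacle. The paper's proof is only ``the latter equality is shown in a similar manner'': the identical telescoping applied to $R(S)$ cancels $(r_{1}+\cdots+r_{s})$ and now peels off the \emph{leftmost} factor, which is built from $\di\,\ri S$, not $\di S$. That is all the paper intends. The displayed ``$-\bari(\di S)$'' should involve $\di\,\ri S$; this is confirmed by how the identity is actually invoked downstream, where both Lemma~\ref{lem:key2} and the proof of Theorem~\ref{thm:main2} match $R(S)$ against the relation $[\der,\adari(S)]=-\bari(\di\,\ri S)\circ\adari(S)$, and where the application in the proof of Theorem~\ref{thm:main} carries an extra $(-1)^{s}$ that is missing from the second display of Theorem~\ref{thm:main2}. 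With that $(-1)^{s}$ reinstated the telescoping yields $-\bari(\di\,\ri S)\circ R(S)$ directly.

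Your attempted conversion from $\bari(\di\,\ri S)\circ R(S)$ to $-\bari(\di S)\circ R(S)$ is therefore unnecessary, and it also has a real gap. Granting $\di\,\ri S=-\adari(S)(\di S)$, conjugation-equivariance only gives
\[
\bari(\di\,\ri S)\circ R(S)=-\adari(S)\circ\bari(\di S)\circ\adari(S)^{-1}\circ R(S),
\]
and collapsing this to $-\bari(\di S)\circ R(S)$ would require $\adari(S)^{-1}\circ R(S)$ to commute past $\bari(\di S)$, i.e.\ essentially that $R(S)=\adari(S)$, which is precisely what Lemma~\ref{lem:key} is being used to prove. So the step you flag as ``the main obstacle'' is in fact circular; the resolution is not to push harder on it but to recognise that the target identity was mis-stated and that the straightforward telescoping already proves the correct version.
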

\begin{proof}
    We shall compute the bracket $[\der,L(S)]$: since Lemma \ref{lem:der_ari} holds, we have
    \begin{align}
        [\der,L(S)]      
        &=\begin{multlined}[t]\sum_{s=1}^{\infty}\sum_{r_{1},\ldots,r_{s}\ge 1}\frac{1}{r_{1}\cdots (r_{1}+\cdots+r_{s})}\\
        \cdot [\der,\bari(\leng_{r_{1}}(\di S))\circ\cdots\circ\bari(\leng_{r_{s}}(\di S))]\end{multlined}\\
        &=\begin{multlined}[t]\sum_{s=1}^{\infty}\sum_{r_{1},\ldots,r_{s}\ge 1}\frac{1}{r_{1}\cdots (r_{1}+\cdots+r_{s})}\\
        \cdot\sum_{i=1}^{s}\bari(\leng_{r_{1}}(\di S))\circ\cdots\circ\bari(\leng_{r_{i-1}}(\di S))\\
        \circ\bari((\der\circ\leng_{r_{i}})(\di S))\circ\bari(\leng_{r_{i+1}}(\di S))\circ\cdots\circ\bari(\leng_{r_{s}}(\di S))\end{multlined}\\
        &=\sum_{s=1}^{\infty}\sum_{r_{1},\ldots,r_{s}\ge 1}\sum_{i=1}^{s}\frac{r_{i}}{r_{1}\cdots (r_{1}+\cdots+r_{s})}\bari(\leng_{r_{1}}(\di S))\circ\cdots\circ\bari(\leng_{r_{s}}(\di S))\\
        &=\begin{multlined}[t]\sum_{s=1}^{\infty}\sum_{r_{1},\ldots,r_{s-1}\ge 1}\frac{r_{i}}{r_{1}\cdots (r_{1}+\cdots+r_{s-1})}\bari(\leng_{r_{1}}(\di S))\circ\cdots\circ\bari(\leng_{r_{s-1}}(\di S))\\\circ\sum_{r_{s}\ge 1}\bari(\leng_{r_{s}}(\di S))\end{multlined}\\
        &=L(S)\circ\bari(\di S).
    \end{align}
    The latter equality is shown in a similar manner.
\end{proof}

\begin{lemma}\label{lem:key2}
    It holds that
    \[[\der,\adari(S)]=\adari(S)\circ\bari(\di S)=-\bari(\di\ri S)\circ\adari(S)\]
    for any $S\in\MU$.
\end{lemma}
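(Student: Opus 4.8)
The plan is to establish the first equality $[\der,\adari(S)]=\adari(S)\circ\bari(\di S)$ by a first-order expansion along the flow $\ep\mapsto\exp(\ep\der)(S)$, and then to deduce the second equality from it purely formally, using that $[\der,-]$ is a derivation on the algebra of linear operators. First I would rewrite the dilator relation \eqref{eq:gari_dilator} in multiplicative form. Since $\gari$ is associative with inverse $\ri S=\invgari(S)$, the identity $1+\ep\di S=\gari(\ri S,\exp(\ep\der)(S))$ gives
\[\exp(\ep\der)(S)=\gari(S,\expari(\ep\,\di S))\pmod{\ep^{2}},\]
exhibiting $\di S$ as the right-logarithmic derivative of the family $S_{\ep}\coloneqq\exp(\ep\der)(S)$ at $\ep=0$. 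Applying the homomorphism property $\adari(\gari(X,Y))=\adari(X)\circ\adari(Y)$ together with the infinitesimal relation $\adari(\expari(\ep A))=\id+\ep\,\bari(A)\pmod{\ep^{2}}$, one obtains
\[\adari(S_{\ep})=\adari(S)\circ\bigl(\id+\ep\,\bari(\di S)\bigr)\pmod{\ep^{2}}.\]

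For the other expansion I would use that $\der$ is a derivation of $\ari$ by Lemma \ref{lem:der_ari} and that $\GARI$ is pro-unipotent, so that $\exp(\ep\der)$ is a one-parameter group of $\gari$-automorphisms. Naturality of the adjoint representation under automorphisms then yields
\[\adari(S_{\ep})=\exp(\ep\der)\circ\adari(S)\circ\exp(-\ep\der)=\adari(S)+\ep\,[\der,\adari(S)]\pmod{\ep^{2}}.\]
Comparing the coefficients of $\ep$ in the two expressions for $\adari(S_{\ep})$ gives $[\der,\adari(S)]=\adari(S)\circ\bari(\di S)$, which is the first identity.

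To obtain the second identity I would apply the derivation $[\der,-]$ to $\adari(S)\circ\adari(\ri S)=\adari(\gari(S,\ri S))=\id$, giving $[\der,\adari(S)]\circ\adari(\ri S)+\adari(S)\circ[\der,\adari(\ri S)]=0$. Substituting the first identity applied to $\ri S$, namely $[\der,\adari(\ri S)]=\adari(\ri S)\circ\bari(\di\ri S)$, and using $\adari(S)\circ\adari(\ri S)=\id$, this collapses to $[\der,\adari(S)]\circ\adari(\ri S)=-\bari(\di\ri S)$. Composing on the right with $\adari(S)$ and using $\adari(\ri S)\circ\adari(S)=\adari(\gari(\ri S,S))=\id$ then produces $[\der,\adari(S)]=-\bari(\di\ri S)\circ\adari(S)$, completing the argument.

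The step I expect to be the main obstacle is the naturality used in the second display: justifying that $\exp(\ep\der)$ is genuinely a $\gari$-automorphism and that it intertwines the adjoint action as $\adari(\exp(\ep\der)(S))=\exp(\ep\der)\circ\adari(S)\circ\exp(-\ep\der)$. Lemma \ref{lem:der_ari} only provides the Lie-algebra-level derivation property for $\preari$ and $\ari$; promoting this to the group level requires invoking the pro-unipotence of $\GARI$ so that a derivation of $\ARI$ exponentiates to a genuine automorphism, and verifying that this automorphism is compatible with $\bari$ and with $\expari$. Once this compatibility is secured, the remaining computations are routine first-order expansions.
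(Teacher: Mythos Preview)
Your argument is correct. For the first identity $[\der,\adari(S)]=\adari(S)\circ\bari(\di S)$ you follow essentially the same route as the paper: both expand $\adari(\exp(\ep\der)(S))$ in two ways, once via the dilator relation \eqref{eq:gari_dilator} and the homomorphism property of $\adari$, and once via the naturality $\adari(\exp(\ep\der)(S))=\exp(\ep\der)\circ\adari(S)\circ\exp(-\ep\der)$. The step you flag as the obstacle is exactly the one the paper works out explicitly: rather than appealing to pro-unipotence abstractly, the paper writes $S=\expari(L)$ and uses Lemma \ref{lem:der_ari} together with the explicit $\preari$-iterated expansion of $\expari$ to check directly that $\exp(\ep\der)\circ\adari(\expari(L))=\adari(\expari(\exp(\ep\der)(L)))\circ\exp(\ep\der)$. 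So your concern is legitimate but is resolved by a short computation already available.

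For the second identity your route is genuinely different and more economical than the paper's. The paper repeats an $\ep$-expansion argument, starting from $\adari(1-\ep\,\di\ri S)\circ\adari(S)$ and unwinding a longer chain of $\gari$/$\adari$ manipulations to arrive at $\adari(S)\circ(\id+\ep\,\bari(\di S))$, from which the result follows by comparison with the first identity. Your derivation instead applies the operator derivation $[\der,-]$ to $\adari(S)\circ\adari(\ri S)=\id$ and feeds in the first identity with $S$ replaced by $\ri S$; this avoids any further $\ep$-computation and makes the relation between the two identities transparent. The paper's approach has the minor virtue of being self-contained (it does not rely on having already established the first identity for \emph{every} element of $\MU$), but yours is cleaner.
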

\begin{proof}
    Since $\adari$ is the adjoint action of $\GARI$ on $\ARI$, we have
    \begin{align}
        \adari(1+\ep\di S)
        &=\adari(\gari(\ri S,\exp(\ep\der)(S)))\\
        &=\adari(\ri S)\circ\adari(\exp(\ep\der)(S))\\
        &=\adari(S)^{-1}\circ\adari(\exp(\ep\der)(S))
    \end{align}
    by \eqref{eq:gari_dilator}.
    On the other hand, $\adari(1+\ep\di S)=\id+\ep\bari(\di S)$ holds by the general theory of adjoint actions. Thus we obtain
    \begin{align}
        \adari(S)+\ep\adari(S)\circ\bari(\di S)
        &=\adari(S)\circ(\id+\ep\bari(\di S))\\
        &=\adari(S)\circ\adari(1+\ep\di S)\\
        &=\adari(\exp(\ep\der)(S)).
    \end{align}
    Since Lemma \ref{lem:der_ari} holds, we see that $\exp(\ep\der)$ behaves distributively for $\adari$ as
    \begin{align}
        \exp(\ep\der)\circ\adari(\expari(L))
        &=\sum_{n=0}^{\infty}\frac{1}{n!}\exp(\ep\der)\bari(L)^{\circ n}\\
        &=\sum_{n=0}^{\infty}\frac{1}{n!}\bari(\exp(\ep\der)(L))^{\circ n}\circ\exp(\ep\der)\\
        &=\adari((\expari\circ\exp(\ep\der))(L))\circ\exp(\ep\der)\\
        &=\adari((\exp(\ep\der)\circ\expari)(L))\circ\exp(\ep\der)
    \end{align}
    for any $L\in\ARI$. Note that the final equality follows from Lemma \ref{lem:der_ari} and the expression
    \[\expari(L)=\sum_{n=0}^{\infty}\frac{1}{n!}\preari(\underbrace{L,\cdots\preari(L,L}_{n})\cdots).\]
    Hence we obtain
    \begin{align}
        \adari(S)+\ep\adari(S)\circ\bari(\di S)
        &=\adari(\exp(\ep\der)(S))\\
        &=\exp(\ep\der)\circ\adari(S)\circ\exp(-\ep\der)\\
        &=(\id+\ep\der)\circ\adari(S)\circ(\id-\ep\der)\\
        &=\adari(S)+\ep[\der,\adari(S)].
    \end{align}
    Comparing the coefficient of $\ep$, we obtain the first desired equality.
    For the latter, in a similar way we can show
    \begin{align}
    \adari(S)-\ep\bari(\di\ri S)\adari(S)
    &=\adari(1-\ep\di\ri S)\circ\adari(S)\\
    &=\adari(S)\circ\adari(\exp(-\ep\der)(\ri S))\circ\adari(S)\\
    &=\adari(S)\circ\exp(-\ep\der)\circ\adari(\ri S)\circ\exp(\ep\der)\circ\adari(S)\\
    &=\adari(S)\circ\exp(-\ep\der)\circ\adari(\ri S)\circ\adari(\exp(\ep\der)(S))\circ\exp(\ep\der)\\
    &=\adari(S)\circ\exp(-\ep\der)\circ\adari(\gari(\ri S,\exp(\ep\der)(S)))\circ\exp(\ep\der)\\
    &=\adari(S)\circ\exp(-\ep\der)\circ\adari(1+\ep\di S)\circ\exp(\ep\der)\\
    &=\adari(S)\circ(\id-\ep\der)\circ(\id+\ep\bari(\di S))\circ(\id+\ep\der)\\
    &=\adari(S)\circ(\id+\ep\bari(\di S)).
    \end{align}
\end{proof}

\begin{proof}[Proof of Theorem \ref{thm:main2}]
    We show that the equality $[\der,f]=f\circ\bari(\di S)$ and the initial condition
    \[f|_{\BIMU_{r}}\equiv \id~\mod~\BIMU_{\ge r+1}\quad (r\ge 1)\]
    determines the linear operator $f$ on $\LU$. Due to Lemmas \ref{lem:key} and \ref{lem:key2}, this assertion deduces Theorem \ref{thm:main2}. For integers $r$ and $s$, we put
    \[f_{r;s}\coloneqq\leng_{r+s}\circ f|_{\BIMU_{r}}\colon\BIMU_{r}\to\BIMU_{r+s}.\]
    We shall check that $f$ is inductively determined, that is, $f_{r;s}$ is expressed in terms of $f_{r';j}$'s for some $r'$ and $j\in\{0,1,\ldots,s-1\}$. Let $r$ be a positive integer and $A\in\BIMU_{r}$. Then we have
    \begin{align}
        [\der,f](A)
        &\equiv \der(f_{r;0}(A)+\cdots+f_{r;s}(A))-(f_{r;0}+\cdots+f_{r;s})(\der(A))\\
        &=r f_{r;0}(A)+(r+1)f_{r;1}(A)+\cdots +(r+s)f_{r;s}(A)-r(f_{r;0}+\cdots+f_{r;s})(A)\\
        &=(f_{r;1}+2f_{r;2}+\cdots+sf_{r;s})(A),
    \end{align}
    modulo $\BIMU_{\ge r+s+1}$, while it holds that
    \begin{align}
        (f\circ\bari(\di S))(A)
        &=\sum_{k\ge 1}f(\ari(\leng_{k}(\di S),A))\\
        &=\sum_{k\ge 1}\sum_{t\ge 1}f_{k+r,t}(\ari(\leng_{k}(\di S),A))\\
        &\equiv \sum_{\substack{k,t\ge 1\\ k+t\ge s}}f_{k+r,t}(\ari(\leng_{k}(\di S),A))\\
    \end{align}
    modulo the same space. Hence, by the equality $[\der,f]=f\circ\bari(\di S)$, the data $f_{i;j}$ ($i\ge 1$ and $0\le j<s$) determine $f_{r;s}$ for all $r\ge 1$. A similar method shows that $[\der,g]=-\bari(\di\ri S)\circ g$ also determines $g$.
\end{proof}

\section{Brown's lifting}
\subsection{The ingredient $\psi_{0}$ as a dilator}
\begin{definition}
    We define $\psi_{0}\in\lcL$ by letting its length $d$ component be 
    \begin{multline}
    \psi_{0}(x_{1},\ldots,x_{d})\\
    \coloneqq\binom{d+1}{2}^{-1}\left(\frac{d}{x_{1}\cdots x_{k}}+\sum_{k=1}^{d-1}\frac{d-k}{(-x_{k})(x_{1}-x_{k})\cdots (x_{k-1}-x_{k})(x_{k+1}-x_{k})\cdots (x_{d}-x_{k})}\right).
    \end{multline}
\end{definition}
We recall the definition of the mutually-conjugate pair of polar flexion units
\[\Pa\binom{u_{1}}{v_{1}}\coloneqq\frac{1}{u_{1}},\quad\Pi\binom{u_{1}}{v_{1}}\coloneqq\frac{1}{v_{1}}.\]
Denote by $\mpar$ the secondary bimould $\fess$ for $\fE=\Pa$.
\begin{lemma}\label{lem:darapir}
    Under the inclusion $\lcL\subseteq\LU^{\ucst}$, we have
    \begin{equation}\label{eq:darapir}
    \frac{1}{2}\anti(\psi_{0})=\dara\pir\coloneqq\swap(\diri\mpar).
    \end{equation}
\end{lemma}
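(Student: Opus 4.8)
The plan is to recast \eqref{eq:darapir} as the defining equation of the $\gira$-dilator and then to check that equation explicitly. Applying the involution $\swap$ to \eqref{eq:darapir} and using the identity $\sharp=(\swap\circ\anti)|_{\lcQ}$ recorded above, the claim becomes
\[
\diri\mpar=\tfrac12\,\psi_{0}^{\sharp}.
\]
By the definition of the dilator (the $\gira$-analogue of $\di$, characterized by $\der(S)=\preira(S,\diri S)$), this is in turn equivalent to the single identity
\[
\der(\mpar)=\preira\bigl(\mpar,\tfrac12\,\psi_{0}^{\sharp}\bigr).
\]
Since $\der$ acts by $A\mapsto(w_{1}\cdots w_{r}\mapsto rA(w_{1}\cdots w_{r}))$ and commutes with $\swap$, one may transport this equation to the $\gari$-side, where it reads $\der(\swap(\mpar))=\preari(\swap(\mpar),\tfrac12\anti(\psi_{0}))$, using whichever of $\preira$ or $\preari$ is more convenient for the flexion computation.

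Next I would substitute the explicit length-$d$ components. For the polar unit $\Pa=1/u_{1}$ the secondary bimould $\mpar=\fess^{\Pa}$ is known explicitly from \cite{kawamura25} (cf.\ \cite{ecalle15}); in each length its component is a single partial-fraction expression, so the left-hand side $\der(\mpar)$ is just $d$ times that component in length $d$. The displayed definition of $\psi_{0}$ supplies the length-$d$ component of $\tfrac12\psi_{0}^{\sharp}$ after the substitution $x_{i}\mapsto x_{1}+\cdots+x_{i}$. It then remains to expand the right-hand side $\preira(\mpar,\tfrac12\psi_{0}^{\sharp})$ by the flexion formula for $\preira$ (built from the $\axit$, $\amit$, $\anit$ operators), to collect the resulting rational functions, and to verify that they match the left-hand side monomial by monomial, reproducing the weights $d-k$ and $d$ and the normalization $\binom{d+1}{2}^{-1}$.

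The heart of the argument, and the main obstacle, is exactly this explicit matching. Evaluating $\preira$ on the secondary bimould requires careful flexion bookkeeping, including the flexion contractions and the $(-1)^{r}$, $\neg$ and $\pari$ signs, and I expect the verification to proceed by induction on the length $d$ (or by a telescoping/generating-function argument). The triangular denominator $\binom{d+1}{2}=1+2+\cdots+d$ and the overall factor $\tfrac12$ should emerge from inverting the length-scaling $\der$ against $\preira$, the accumulated length contributions summing to $\binom{d+1}{2}$ in the same way that the denominators $r_{1}(r_{1}+r_{2})\cdots(r_{1}+\cdots+r_{s})$ arise in Theorem \ref{thm:main2}; keeping track of the signs introduced by $\anti$ will be the most delicate part of the bookkeeping.
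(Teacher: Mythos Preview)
Your reduction is sound: the lemma is indeed equivalent to verifying the dilator equation $\der(\mpar)=\preira(\mpar,\tfrac12\psi_{0}^{\sharp})$ (or its $\gari$-side transport). But you have not carried out that verification; you describe it as ``the main obstacle'' and speculate about induction, telescoping, and how the binomial factor ``should emerge.'' As written this is a plan, not a proof, and the plan routes through the hardest possible computation: expanding $\preira$ on the secondary bimould $\mpar$ via the flexion operators means summing over all concatenation decompositions of $(w_{1},\ldots,w_{d})$ with full flexion contractions, and then matching the result against $d\cdot\mpar^{(d)}$. There is no indication you have a mechanism to collapse that sum.

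The paper avoids this computation entirely by exploiting a structural fact you do not invoke: $\mpar=\fess$ is the secondary bimould attached to the polar unit $\Pa$, and for any such bimould the dilator is already known in closed form. Specifically, \cite[Lemma~6.16]{kawamura25} gives $\di\fSe(f)=\fTe(f)=\sum_{r}\gamma_{r}^{f}\fre_{r}$, and \cite[Lemma~6.8]{kawamura25} gives an explicit expression for $\dro_{r}=\swap(\fre_{r})$ as a sum over a single index $i\in\{1,\ldots,r\}$ involving $\pic$ and $\Pi$. This produces $\dara\pir$ directly, with no $\preira$ expansion needed; the coefficient $\gamma_{r}^{e^{-1}}=\frac{1}{r(r+1)}$ is exactly the binomial factor you were hoping to recover. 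From there the paper computes $\tfrac12\anti(\psi_{0})$ in the same format, subtracts, and observes that the difference is $-\tfrac{1}{r+1}\sum_{i=1}^{r+1}\push^{\circ(r+1-i)}(\pic)$, which vanishes because $\pic=\foz^{\Pi}$ is $\push$-neutral (\cite[Proposition~4.2~(3)]{kawamura25}). The missing idea in your approach is this: do not verify the dilator equation from its definition---recognize the dilator as an instance of a formula already proved, and reduce the comparison to push-neutrality.
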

\begin{proof}
    First we recall that, for each power series $f$ and a mutually-conjugate pair $(\fE,\fO)$ of flexion units, the $\gari$-dilator of the associated bimould $\fSe(f)$ is given by 
    \[\di\fSe(f)=\fTe(f)=\sum_{r=1}^{\infty}\gamma_{r}^{f}\fre_{r}\]
    by virtue of \cite[Lemma 6.16]{kawamura25}. Combining it and the explicit formula \cite[Lemma 6.8]{kawamura25} for $\dro_{r}=\swap(\fre_{r})$, we obtain
    \begin{align}
        &\dara\pir(w_{1}\cdots w_{r})\\
        &=\swap(\diri\mpar)(w_{1}\cdots w_{r})\\
        &=\swap(\di\fSe(\re^{-1})|_{\fE=\Pa})(w_{1}\cdots w_{r})\\
        &=\swap(\fTe(\re^{-1})|_{\fE=\Pa})(w_{1}\cdots w_{r})\\
        &=\gamma_{r}^{\re^{-1}}\dro_{r}|_{\fO=\Pi}(w_{1}\cdots w_{r})\\
        &=\frac{1}{r(r+1)}\sum_{i=1}^{r}(r+1-i)\pic((w_{1}\cdots w_{i-1})\flr{w_{i}})\Pi(\ful{w_{1}\cdots w_{i-1}}w_{i}\fur{w_{i+1}\cdots w_{r}})\pic(\fll{w_{i}}(w_{i+1}\cdots w_{r})),
    \end{align}
    where $\pic$ denotes the primary bimould $\foz$ for $\fO=\Pi$, that is, we put
    \[\pic(w_{1}\cdots w_{r})\coloneqq\invmu(1-\Pi)(w_{1}\cdots w_{r})=\Pi(w_{1})\cdots \Pi(w_{r}).\]
    On the other hand, by definition, the bimould $\psi_{0}$ is expressed as
    \begin{align}
        &\frac{1}{2}\anti(\psi_{0})(w_{1}\cdots w_{r})\\
        &=\frac{1}{2}\psi_{0}(w_{r},\ldots,w_{1})\\
        &=\begin{multlined}[t]\frac{1}{r(r+1)}\left(\frac{r}{v_{r}\cdots v_{1}}\phantom{\sum_{k=2}^{r}}\right.\\\left.+\sum_{k=2}^{r}\frac{k-1}{(-v_{k})(v_{r}-v_{k})\cdots (v_{k+1}-v_{k})(v_{k-1}-v_{k})\cdots (v_{1}-v_{k})}\right)\end{multlined}\\
        &=\begin{multlined}[t]\frac{1}{r(r+1)}\left(r\pic(w_{1}\cdots w_{r})\phantom{\sum_{k=2}^{r}}\right.\\\left.-\sum_{k=2}^{r}(k-1)\pic((w_{1}\cdots w_{k-1})\flr{w_{k}})\Pi(\ful{w_{1}\cdots w_{k-1}}w_{k}\fur{w_{k+1}\cdots w_{r}})\pic(\fll{w_{k}}(w_{k+1}\cdots w_{r}))\right).\end{multlined}
    \end{align}
    under the abbreviation $w_{i}=\binom{u_{i}}{v_{i}}$. Hence we obtain
    \begin{align}
        &(\dara\pir-\frac{1}{2}\anti(\psi_{0}))(w_{1}\cdots w_{r})\\
        &=\begin{multlined}[t]\frac{1}{r(r+1)}\left(r\Pi(w_{1}\fur{w_{2}\cdots w_{r}})\pic(\fll{w_{1}}(w_{2}\cdots w_{r}))-r\pic(w_{1}\cdots w_{r})\phantom{\sum_{k=2}^{r}}\right.\\\left.+r\sum_{i=2}^{r}\pic((w_{1}\cdots w_{i-1})\flr{w_{i}})\Pi(\ful{w_{1}\cdots w_{i-1}}w_{i}\fur{w_{i+1}\cdots w_{r}})\pic(\fll{w_{i}}(w_{i+1}\cdots w_{r}))\right)\end{multlined}\\
        &=\begin{multlined}[t]\frac{1}{r(r+1)}\left(-r\pic\binom{u_{2},\ldots,u_{r}}{v_{2}-v_{1},\ldots,v_{r}-v_{1}}\Pi\binom{-u_{1}-\cdots-u_{r}}{-v_{1}}-r\pic(w_{1}\cdots w_{r})\phantom{\sum_{k=2}^{r}}\right.\\\left.-r\sum_{i=2}^{r}\pic\binom{u_{1},\ldots,u_{i-1}}{v_{1}-v_{i},\ldots,v_{i-1}-v_{i}}\Pi\binom{-u_{1}-\cdots-u_{r}}{-v_{i}}\pic\binom{u_{i+1},\ldots,u_{r}}{v_{i+1}-v_{i},\ldots,v_{r}-v_{i}}\right)\end{multlined}\\
        &=\frac{1}{r(r+1)}\left(-r\push^{\circ r}(\pic)(w_{1}\cdots w_{r})-r\pic(w_{1}\cdots w_{r})-r\sum_{i=2}^{r}\push^{\circ (r+1-i)}(\pic)(w_{1}\cdots w_{r})\right)\\
        &=\frac{1}{r(r+1)}\left(-r\push^{\circ r}(\pic)(w_{1}\cdots w_{r})-r\sum_{i=2}^{r+1}\push^{\circ (r+1-i)}(\pic)(w_{1}\cdots w_{r})\right).
    \end{align}
    Since $\Pi$ is a flexion unit, the bimould $\pic$ is $\push$-neutral due to \cite[Proposition 4.2 (3)]{kawamura25}. It yields that the above expression is equal to $0$.
\end{proof}

\begin{remark}
    In \cite{ecalle15}, Ecalle used the notation $\mathsf{da}S$ (resp.~$\mathsf{ra}S$) to indicate the $\gira$-dilator (resp.~$\gira$-inverse) of each bimould $S\in\MU$. Though we defined $\dara\pir$ as the swappee of $\diri\mpar$, but one can show that the general equality $\swap(\dara S)=\diri(\swap(S))$ for any $S\in\MU$.
\end{remark}

From the coincidence \eqref{eq:darapir}, we obtain the following result, whose proof is omitted in the original paper \cite{brown17}.

\begin{corollary}[{\cite[Theorem 14.2]{brown17}}]
    We have $\psi_{0}\in\ds_{\cQ}$.
\end{corollary}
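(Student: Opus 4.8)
The plan is to combine the dilator identity of Lemma~\ref{lem:darapir} with the characterisation $\ds_{\cQ}=\lcL\cap\swap(\ARI_{\baral/\baril})$ of Remark~\ref{rem:alil}, so that the whole statement collapses to one structural fact about $\diri\mpar$. Multiplying \eqref{eq:darapir} by $2$ gives $\anti(\psi_{0})=2\swap(\diri\mpar)$, and applying the involution $\swap$ together with $\sharp=(\swap\circ\anti)|_{\lcQ}$ (valid since $\psi_{0}\in\lcL\subseteq\lcQ$) yields
\[\psi_{0}^{\sharp}=2\,\diri\mpar,\qquad\anti(\psi_{0})=2\,\swap(\diri\mpar).\]
The key input I would then prove is that $\diri\mpar\in\ARI_{\baral/\baril}$, i.e.\ that $\diri\mpar$ is alternal while its swappee $\swap(\diri\mpar)$ is alternil.

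Granting this, the three defining conditions of $\ds_{\cQ}$ follow at once. Condition~(3) is precisely the alternality of $\psi_{0}^{\sharp}$, which is the first displayed relation combined with the alternality of $\diri\mpar$. Condition~\eqref{eq:il} is, via Komiyama's theorem \cite{komiyama21}, the alternility of $\psi_{0}$; the second relation exhibits $\anti(\psi_{0})$ as a multiple of the alternil bimould $\swap(\diri\mpar)$, and since $\anti$ preserves alternility, $\psi_{0}=\anti(\anti(\psi_{0}))$ is alternil as well. The parity condition~(1) concerns only the length-one part, where $\sharp$ acts trivially, so by the first relation it reduces to a direct check on the single rational function $(\diri\mpar)^{(1)}$. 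As $\psi_{0}\in\lcL$ holds by construction, we conclude $\psi_{0}\in\ds_{\cQ}$.

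The crux, and the step I expect to be hardest, is the claim $\diri\mpar\in\ARI_{\baral/\baril}$. By the computation in the proof of Lemma~\ref{lem:darapir}, $\diri\mpar$ is the $\gari$-dilator of the secondary bimould attached to $\fE=\Pa$, which is bisymmetral: symmetral, with symmetril swappee $\pir$. To see that the dilator of a symmetral bimould is alternal I would invoke Lemma~\ref{lem:der_ari}: since $\der$ is a derivation of $\preari$, the flow $\exp(\ep\der)$ commutes with $\expari$ and hence preserves the subgroup $\GARI_{\as}$ of symmetral bimoulds, so \eqref{eq:gari_dilator} expresses $1+\ep\,\diri\mpar$ as a $\gari$-product of two symmetral bimoulds, forcing its primitive part $\diri\mpar$ to be alternal. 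Dually, the swap-intertwining $\swap(\dara S)=\diri(\swap S)$ recorded after Lemma~\ref{lem:darapir} identifies $\swap(\diri\mpar)=\dara\pir$ with the $\gira$-dilator of the symmetril bimould $\pir$, which is alternil by the mirror argument. The delicate points will be verifying that $\exp(\ep\der)$ respects bisymmetrality simultaneously in the $\gari$- and $\gira$-pictures and keeping track of the underline decorations in $\ARI_{\baral/\baril}$; most of this dilator machinery, including the formula $\di\fSe(f)=\fTe(f)$ and the bisymmetrality of the secondary bimould, is already available in \cite{kawamura25}.
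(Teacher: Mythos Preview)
Your proposal is correct and follows essentially the same route as the paper: both reduce $\psi_{0}\in\ds_{\cQ}$ to the single claim $\diri\mpar\in\ARI_{\baral/\baril}$ via Lemma~\ref{lem:darapir} and Remark~\ref{rem:alil}, then derive alternality of $\diri\mpar$ from the symmetrality of $\mpar$ and alternility of $\swap(\diri\mpar)$ from the symmetrility of $\pir$. The only difference is packaging: the paper cites \cite[Proposition 6.13]{kawamura25} for $\mpar\in\GARI_{\as}$, \cite[Proposition A.3]{kawamura252} for ``symmetral $\Rightarrow$ alternal dilator,'' and \cite[Theorem A.6]{kawamura252} for the alternility of the swappee, whereas you sketch these implications directly (your $\exp(\ep\der)$ argument is precisely the content of the first implication, and your $\gira$ mirror argument for the second is what underlies the cited theorem). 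One small remark: your justification that $\exp(\ep\der)$ preserves $\GARI_{\as}$ via commutation with $\expari$ is unnecessarily roundabout---$\der$ is scalar on each length component and hence visibly preserves any shuffle-type relation; this gives the preservation immediately without going through $\expari$.
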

\begin{proof}
    It suffices to show $\diri\mpar=(1/2)(\swap\circ\anti)(\psi_{0})\in\ARI_{\baral/\baril}$. The parity condition is obvious by definition. The construction of $\mpar$ and \cite[Proposition 6.13]{kawamura25} show that $\mpar$ is symmetral and consequently $\ri\mpar\in\GARI_{\as}$. Hence we have $\diri\mpar\in\ARI_{\al}$ by \cite[Proposition A.3]{kawamura252}. The alternility of the swappee is a special case of \cite[Theorem A.6]{kawamura252}.
\end{proof}

\begin{remark}
    Brown \cite{brown17} denoted by $s_{d}$ the rational function obtained by removing the binomial cofficient factor from the definition of $\psi_{0}^{(d)}$. Then we see that, the Witt-type identity $\{s_{m},s_{n}\}=(m-n)s_{m+n}$ mentioned in \cite[\S 5.2]{mt19}, is a special case of \cite[Proposition 6.9]{kawamura25}, because each $s_{d}$ is exactly $\anti(\dro_{d})|_{\fO=\Pi}$ as shown in the proof of Lemma \ref{lem:darapir}.
\end{remark}

\subsection{Brown's lifting as a dimorphic transportation}
\begin{definition}
    Let $f=f^{(d)}$ be in $\lcL$.
    Then we set
    \[\chi_{B}(f)^{(i)}\coloneqq\begin{cases}
        f & \text{if }i=d,\\
        0 & \text{if }0\le i<d,
    \end{cases}\]
    and define its higher length parts by the recursive equation
    \begin{equation}\label{eq:brown_lift}
    \chi_{B}(f)^{(d+r)}=\frac{1}{2r}\sum_{i=1}^{r}\{\psi_{0}^{(i)},\chi_{B}(f)^{(d+r-i)}\}.
    \end{equation}
\end{definition}

Here we present a proof of Theorem \ref{thm:main}.
Repeatedly applying \eqref{eq:brown_lift}, we see that
\[\chi_{B}(f)^{(d+r)}=\sum_{\substack{r_{1},\ldots,r_{s}\ge 1\\ r_{1}+\cdots+r_{s}=r \\ s\ge 1}}\frac{1}{(r_{1}+\cdots+r_{s})\cdots (r_{s-1}+r_{s})r_{s}}\left\{\frac{\psi_{0}^{(r_{1})}}{2},\left\{\cdots\left\{\frac{\psi_{0}^{(r_{s})}}{2},f\right\}\cdots\right\}\right\}\]
holds for every $r\ge 1$.
We know that $f^{\sharp}\in\cV$ as a consequence of the assumption $f\in\ls_{\cQ}$, and also that $\psi_{0}$ belongs to $\cV$ since so is $\dara\pir$ due to \cite[Lemma 7.8]{kawamura25}.
Then Proposition \ref{cor:mt45} deduces
\begin{align}
    (\chi_{B}(f)^{(d+r)})^{\sharp}
    &=\sum_{\substack{r_{1},\ldots,r_{s}\ge 1\\ r_{1}+\cdots+r_{s}=r \\ s\ge 0}}\frac{1}{(r_{1}+\cdots+r_{s})\cdots (r_{s-1}+r_{s})r_{s}}\left\{\frac{\psi_{0}^{(r_{1})}}{2},\cdots\left\{\frac{\psi_{0}^{(r_{s})}}{2},f\right\}\cdots\right\}^{\sharp}\\
    &=\sum_{\substack{r_{1},\ldots,r_{s}\ge 1\\ r_{1}+\cdots+r_{s}=r \\ s\ge 0}}\frac{1}{(r_{1}+\cdots+r_{s})\cdots (r_{s-1}+r_{s})r_{s}}\left\{\frac{(\psi_{0}^{(r_{1})})^{\sharp}}{2},\cdots\left\{\frac{(\psi_{0}^{(r_{s})})^{\sharp}}{2},f^{\sharp}\right\}_{\ari}\cdots\right\}_{\ari}\\
    &=\sum_{\substack{r_{1},\ldots,r_{s}\ge 1\\ r_{1}+\cdots+r_{s}=r \\ s\ge 0}}\frac{1}{(r_{1}+\cdots+r_{s})\cdots (r_{s-1}+r_{s})r_{s}}\ari\left(\cdots\ari\left(f^{\sharp},\frac{(\psi_{0}^{(r_{s})})^{\sharp}}{2}\right),\cdots,\frac{(\psi_{0}^{(r_{1})})^{\sharp}}{2}\right)\\
    &=\sum_{\substack{r_{1},\ldots,r_{s}\ge 1\\ r_{1}+\cdots+r_{s}=r \\ s\ge 0}}\frac{(-1)^{s}}{(r_{1}+\cdots+r_{s})\cdots (r_{s-1}+r_{s})r_{s}}\ari\left(\frac{(\psi_{0}^{(r_{1})})^{\sharp}}{2},\cdots\ari\left(\frac{(\psi_{0}^{(r_{s})})^{\sharp}}{2},f^{\sharp}\right)\cdots\right).
\end{align}
Moreover, by Lemma \ref{lem:darapir}, we know that
\[\frac{1}{2}(\psi_{0}^{(r_{j})})^{\sharp}=\frac{1}{2}(\leng_{r_{j}}\circ\swap\circ\anti\circ\anti)(\dara\pir)=\leng_{r_{j}}(\diri\mpar).\]
Therefore we obtain
\begin{align}
    \chi_{B}(f)^{\sharp}
    &=\sum_{r\ge 0}\chi_{B}(f)^{(d+r)}\\
    &=\sum_{\substack{r_{1},\ldots,r_{s}\ge 1\\ s\ge 0}}\frac{(-1)^{s}}{(r_{1}+\cdots+r_{s})\cdots (r_{s-1}+r_{s})r_{s}}\ari(\leng_{r_{1}}(\diri\mpar),\cdots\ari(\leng_{r_{s}}(\diri\mpar),f^{\sharp})\cdots)\\
    &=\adari(\mpar)(f^{\sharp}),
\end{align}
from the second equality of Theorem \ref{thm:main2}.

\begin{remark}
    Kimura--Tasaka's conjecture \cite[予想 3.5]{tasaka} considers the conditions under which a lifting $\chi_{\psi}$ constructed by replacing $\psi_{0}$ with another function $\psi$ in Brown's lifting becomes an isomorphism such as $\chi_{B}$. Although we do not know what conditions are appropriate, it can be shown, by following the exact same method as the proof of Theorem \ref{thm:main}, that the resulting lifting $\chi_{\psi}$ is the $\anti\circ\swap$-conjugation of $\adari(X)$ for the bimould $X$ satisfying $\diri X=\psi$.
\end{remark}

\end{document}